\newtheorem*{rep@theorem}{\rep@title}
\newcommand{\newreptheorem}[2]{%
	\newenvironment{rep#1}[1]{%
		\def\rep@title{#2 \ref{##1}}%
		\begin{rep@theorem}}%
		{\end{rep@theorem}}}
\newtheorem{theorem}{Theorem}[section]
\newtheorem{lemma}{Lemma}[section]
\newtheorem{proposition}[lemma]{Proposition}
\newtheorem{remark}[lemma]{Remark}
\newtheorem{definition}[lemma]{Definition}
\definecolor{ao}{rgb}{0.0, 0.5, 0.0}
\definecolor{lasallegreen}{rgb}{0.03, 0.47, 0.19}
\let\oldnorm\norm
\def\norm{\@ifstar{\oldnorm}{\oldnorm*}}
\newcommand{\al} {\alpha}
\newcommand{\pa} {\partial}
\newcommand{\De} {\Delta}
\newcommand{\ga} {\gamma}
\newcommand{\Ga} {\Gamma}
\newcommand{\Om} {\Omega}
\newcommand{\la} {\lambda}
\newcommand{\noi} {\noindent}
\newcommand{\lmap}{\longmapsto}
\newcommand{\dx}{{\,\rm d}x}
\newcommand{\dt}{{\,\rm d}t}
\newcommand{\dS}{{\,\rm dS}}
\newcommand{\C}{{\mathcal C}}
\newcommand{\E}{{\mathcal E}}
\newcommand{\F}{{\mathcal F}}
\newcommand{\R}{{\mathbb R}}
\newcommand{\N}{{\mathbb N}}
\newcommand\Item[1][]{%
	\ifx\relax#1\relax  \item \else \item[#1] \fi
	\abovedisplayskip=0pt\abovedisplayshortskip=0pt~\vspace*{-\baselineskip}}
\newcommand{\wide}[1]{\widetilde{#1}}
\def\w{{\widetilde w}}
\def\dx{{\,\rm d}x}
\def\dt{{\,\rm d}t}
\def\sb2{{{\mathcal D}^{1,2}_0(B_1^c)}}
\def\w2r{{{ W}^{2,2}(\R^N)}}
\def\d2{{{\mathcal D}^{2,2}_0(\Om )}}
\def\A{{\mathcal A}}
\def\C{{\mathcal C}}
\def\E{{\mathcal E}}
\def\Nn{{\mathcal N}}
\def\F{{\mathcal F}}
\def\h1{{H^1(\Om)}}
\def\ws2{{\F_{\frac{N}{2}}}}
\def\c1Loc{{\C_{loc}^1}}
\newcommand{\setword}[2]{%
  \phantomsection
  #1\def\@currentlabel{\unexpanded{#1}}\label{#2}%
}
\begin{document}
\singlespacing
\title[Optimal Shapes for the First Dirichlet Eigenvalue]{Optimal Shapes for the First Dirichlet Eigenvalue of the $p$-Laplacian and Dihedral symmetry}

\author{Anisa M. H. Chorwadwala}
\address{Address: Indian Institute of Science Education and Research Pune, Dr. Homi Bhabha Road, Pashan,
Pune 411008, India}

\email{anisa23in@gmail.com, anisa@iiserpune.ac.in,}


\author{Mrityunjoy Ghosh}
\address{Address: Department of Mathematics, Indian Institute of Technology Madras, Chennai 600036, India}
\email{ghoshmrityunjoy22@gmail.com}

\subjclass[2020]{35P15, 35P30, 49R05, 35B51, 35Q93}



\keywords{$p$-Laplacian, eigenvalue problem, Dihedral symmetry, rotating plane method, strong comparison, second eigenfunction, nodal set}

\begin{abstract}
In this paper, we consider the optimization problem for the first Dirichlet eigenvalue $\lambda_1(\Omega)$ of the $p$-Laplacian $\Delta_p$, $1< p< \infty$, over a family of doubly connected planar domains $\Omega= B \setminus \overline{P}$,
where $B$ is an open disk and $P\subsetneq B$ is a domain which is invariant under the action of a dihedral group $\mathbb{D}_n$ for some $n \geq 2,\;n\in \mathbb{N}$. We study the behaviour of $\lambda_1$ with
respect to the rotations of $P$ about its center. We prove that the extremal configurations correspond to the cases where $\Omega$ is symmetric with respect to the line containing both the centers. Among these optimizing domains, the OFF configurations correspond to the minimizing ones while the ON configurations correspond to the maximizing ones. Furthermore, we obtain symmetry (periodicity) and monotonicity properties of $\lambda_1$ with respect to these rotations. In particular, we prove that the conjecture formulated in \cite{Anisa2020} for $n$ odd and $p=2$ holds true. As a consequence of our monotonicity results, we show that if the nodal set of a second eigenfunction of the $p$-Laplacian possesses a dihedral symmetry of the same order as that of $P$, then it can not enclose $P$.
\end{abstract}
\maketitle
\section{Introduction and statements of the main results}\label{intro}
Let $\Om \subset \mathbb{R}^2$ be a bounded domain. For $1<p<\infty$, we consider the following eigenvalue problem for the $p$-Laplacian on $\Om:$
\begin{equation}\tag{E}\label{problem}
	\left. \begin{aligned}
	-\Delta_p u &=\lambda |u|^{p-2}u \quad\text{in} \quad\Omega,\\
	u &=0 \qquad\quad \quad\;\text{on}\quad \partial \Om,
	\end{aligned}\right\}
\end{equation}
where $\Delta_p$ is the $p$-Laplacian given by
$\Delta_p u \coloneqq \text{div}(|\nabla u|^{p-2}\nabla u)$.

A real number $\la$ is called an eigenvalue of (\ref{problem}) if $\exists$ $u\in W^{1,p}_{0}(\Om)\setminus\{0\}$ such that the following holds:
\begin{equation*}
	\int_{\Om}|\nabla u|^{p-2}\left<\nabla u,\nabla w \right> \dx=\la\int_{\Om}|u|^{p-2}uw \dx, \quad\forall\; w\in W^{1,p}_{0}(\Om).
\end{equation*}
\par Let $\la_1(\Om)$ denote the smallest positive eigenvalue of the boundary value problem (\ref{problem}). 
Then, $\la_1(\Om)$  has the following variational characterization; cf. \cite{Azorero1987}:
\begin{equation}\label{vari}
	\la_1(\Om)=\displaystyle\inf\Bigg\{\frac{\int_{\Om}|\nabla u|^p}{\int_{\Om}|u|^p}: u\in W^{1,p}_{0}(\Om)\setminus\{0\} \Bigg\}.
\end{equation}
This eigenvalue is simple; cf. \cite{Lindqvist2006}. Without loss of generality, an eigenfunction corresponding to $\la_1(\Om)$ can be chosen to be positive and of unit $L^p$-norm; cf. \cite{Lindqvist1990}.

For $p=2$, Hersch \cite{Hersch} studied the problem (\ref{problem}) on doubly connected planar domains having holes with some assumption on the boundary. Using the \textit{effectless cut} introduced by Weinberger \cite{Weinberger}, he proved that \textit{``among all doubly or multiply connected domains which are fixed along their outer boundary $\Ga_0$ and one inner boundary $\Ga_1$ (and otherwise free) with given $A,\;l_{\Ga_0},\;l_{\Ga_1}$ satisfying $l_{\Ga_0}^2- l_{\Ga_1}^2=4\pi A$, the concentric annular membrane has the maximal $\la_1$"}, where $A$ is the area of the domain and $l_{\Ga_0},\;l_{\Ga_1}$ denotes the length of $\Ga_0$ and $\Ga_1$ respectively. In particular, if $\Om=B_r\setminus \overline{B_s}$, where $B_r,\;B_s$ are open disks in $\mathbb{R}^2$ of radius $r$ and $s$ respectively, then Hersch's result yields that, 
\begin{equation}\label{Hersch_ball}
    \la_1(B_r\setminus \overline{B_s})\leq \la_1(\Om_\#).
\end{equation}
where $\Om_\#$ is the concentric annulus bounded by the disks $B_r$ and $B_s$. However, inequality \eqref{Hersch_ball} does not give any insight about the variations of $\la_1$ with respect to the placement of $B_s$ inside $B_r$. In \cite{ramm},  Ramm and Shivakumar conjectured that $\la_1(B_r\setminus \overline{B_s})$ strictly decreases when the inner disk $B_s$ moves towards the boundary of the outer disk $B_r$. This conjecture was proved analytically later in the article (arXiv:math-ph/9911040) using an argument communicated to Ramm by Ashbaugh. Later, Kesavan in  \cite{Kesavan} and Harell in \cite{Harell2001} extended this result to higher dimensions (i.e., $n\geq 3$) for the same case $p=2$ with independent proofs. Chorwadwala and Mahadevan \cite{Anisa2015} generalized this monotonicity result for the first eigenvalue $\la_1$ of \eqref{problem}. They showed that \textit{``$\la_1(B_r\setminus \overline{B_s})$ decreases when the center of the inner ball $B_s$ moves towards the boundary of the outer ball $B_r$ along the radial directions of $B_r$"}. However, the strict monotonicity has been obtained by Anoop, Bobkov, and Sasi in \cite{Anoop2018}. The results related to the optimization of eigenvalues with respect to other boundary conditions can be found in \cite{Anoop2020Ashok,Anoop2020Kesavan,Anoop2021Szego}. For further literature and open problems in this direction, we refer to the book \cite{Henrot2006}; see also \cite{henrot2021} for recent developments in spectral geometry. 

In this article, we consider the boundary value problem \eqref{problem} on doubly connected planar domains whose outer boundary is a circle, whereas the inner domain possesses a dihedral symmetry. First, we start with establishing a notational setup.
\begin{description}
\item[A0\label{itm:A0}]
Let $P$ be a bounded and simply connected planar domain such that the boundary $\pa P$ is a simple, closed $C^2$ curve. 
\end{description}

For $n\in \N$, $n \geq 2$, consider the dihedral group $\mathbb{D}_n$ of order $2n$ generated by the rotation $\rho$ by angle $\frac{2\pi}{n}$ and a reflection $s$ such that $s\rho s = \rho^{-1}$.
We make further assumptions (\nameref{itm:A1}) and (\nameref{itm:A2}) on the domain $P$ satisfying (\nameref{itm:A0}) as follows.
\begin{description}
\item[A1\label{itm:A1}] ($\mathbb{D}_n$ symmetry) $P$ has a $\mathbb{D}_n$ symmetry for some $n \in \N$, $n \geq 2$.
\end{description}
Assumption (\nameref{itm:A1}) means that $P$ is invariant under the action of the dihedral group $\mathbb{D}_n$ of order $2n$.
Such domains admit $n$ axes of symmetry (i.e., $2n$ half-axes of symmetry), wherein two consecutive axes make an angle $\frac{\pi}{n}$ with each other. Also, there exists a unique point $o$ inside $P$, which we call as the center of $P$, at which all these axes of symmetry of $P$ intersect. Assuming that $o=(0,0)$, let the boundary $\pa P$ of $P$ be represented (See \eqref{P_parametrization}) in polar coordinates as $(f(\phi),\phi)$, where $\phi\in [0,2\pi)$ and $f:[0,2\pi)\longmapsto \R$ is a $C^2$ map with $f(0)=f(2\pi)$.
\begin{description}
\item[A2\label{itm:A2}] (Monotonicity of the boundary $\partial P$)
 Consider the sectors formed by two consecutive axes of symmetry of the domain $P$. On each of these sectors, the distance $d(o,x)=\|x\|$ for $x
\in \pa P$ is monotonic as a function of $\phi$, i.e., on each sector formed by two consecutive axes of symmetry of $P$, the function $x \lmap d(o,x)$, thought of as the function $\phi \lmap f(\phi)$, is a monotonic function.
\end{description}
 \begin{definition}
  The  points $(f(\phi), \phi)$ on $\pa P$ at which $f(\phi)$ is maximum are called \textit{Outer Vertices} of $P$. Likewise, the  points on $\pa P$ at which $f(\phi)$ is minimum are called the \textit{Inner Vertices} of $P$. The union of the outer vertices and inner vertices of $P$ will be referred to as the Vertices of $P$.
  \end{definition}
  
 From the above definition, it is easy to observe that if $P$ satisfies (\nameref{itm:A0})-(\nameref{itm:A2}), then it has a total of $2n$ vertices, out of which $n$ vertices are inner vertices while the other $n$ vertices are outer vertices. 
 
 In this article, we consider the domains satisfying the following:
  \begin{description}
\item[D\label{itm:Domain}]
 $\Om= B \setminus \overline{P}\subset \mathbb{R}^2$,
where $B$ is an open disk, $P$ is a domain satisfying (\nameref{itm:A0}), (\nameref{itm:A1}) and (\nameref{itm:A2}) such that  $\rho_t(P)\subset B$, $\forall \;t \in [0,2\pi)$, where $\rho_t$ denotes the rotation in $\R^2$ about the center $o$ of $P$ in the anticlockwise direction by angle $t$. \end{description}

Since the first Dirichlet eigenvalue $\la_1(\Om)$ associated to the eigenvalue problem (\ref{problem}) is invariant under isometries of $\mathbb{R}^2$, without loss of generality, we assume that
the centers of $B$ and $P$ are on the $x_1$-axis. As supposed earlier, the center $o$ of $P$ is at the origin, i.e., $o=(0,0)$. Let the center of $B$ be $(-c,0)$ for $c\geq 0$.
Now we give the following definitions concerning the axial symmetric position of a domain $\Om$ satisfying (\nameref{itm:Domain}); see \cite[Section 3.2]{Anisa2020} for an equivalent statement:
\begin{definition}\label{OnOff_position}
The domain $P$ is said to be in an OFF position with respect to the disk $B$ if an inner vertex of $P$ lies on the negative $x_1$-axis. The domain $P$ is said to be in an ON position with respect to $B$ if an outer vertex of $P$ lies on the negative $x_1$-axis.
\end{definition}

\begin{figure}[H]\centering
\subfloat[OFF
]{
\label{Off position}
\includegraphics[width=0.15\textwidth]{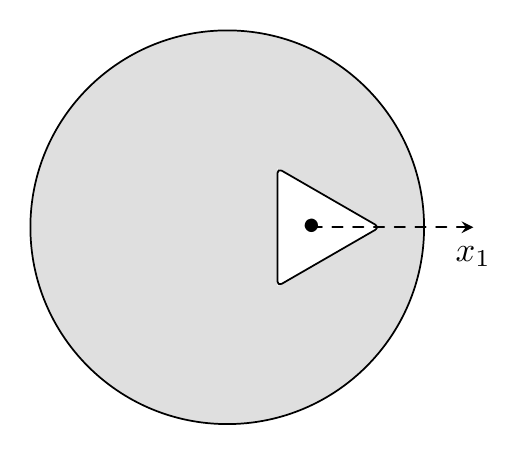}}
\hspace{10mm}
\subfloat[ON
]{
\label{On position}
\includegraphics[width=0.155\textwidth]{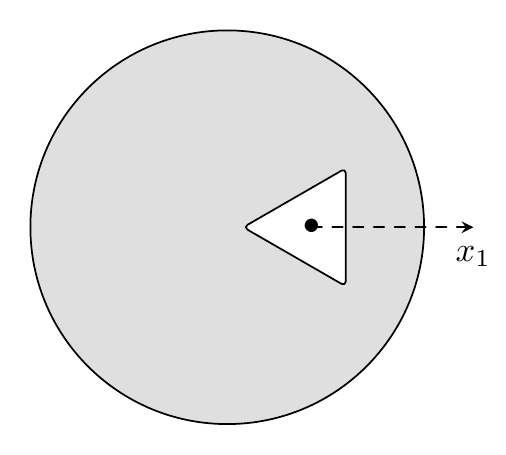}}
\hspace{10mm}
\subfloat[OFF
]{
\label{Off position2}
\includegraphics[width=0.15\textwidth]{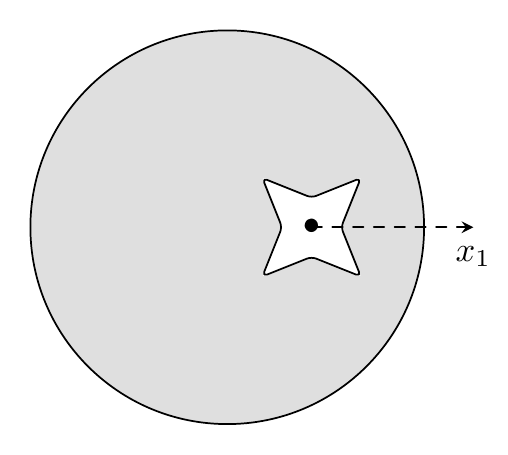}}
\hspace{10mm}
\subfloat[ON
]{
\label{On position2}
\includegraphics[width=0.15\textwidth]{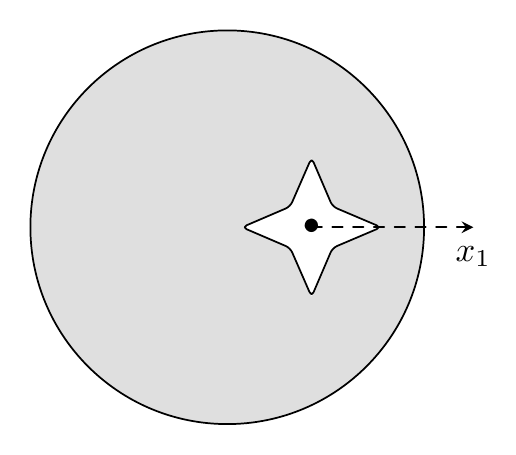}}
\caption{OFF and ON configurations: $P$ having $\mathbb{D}_3$ symmetry-(A) and (B),\\ $P$ having $\mathbb{D}_4$ symmetry-(C) and (D). }\label{fig:on_off}
\end{figure}

In \cite[Theorem 1]{kiwanDihed}, for $p=2$ and $\Om=P_1\setminus \overline{P_2}$, where $P_1$ and $P_2$ satisfy (\nameref{itm:A0}), (\nameref{itm:A1}) (for fixed $n\in\N,\;n\geq 2$), (\nameref{itm:A2}) and $P_1$, $P_2$ concentric, authors proved that among all rotations of the inner domain $P_2$ about its center, $\la_1(\Om)$ takes maximum (minimum) value when $P_2$ is in an ON (OFF) position with respect to $P_1$. The proof of this result is based on the Hadamard perturbation formula (See Section \ref{Hadamard}), which gives a representation of $\la_1'$, and a moving plane method due to Serrin \cite{Serrin}. An analogous result of \cite{kiwanDihed} for the nonconcentric case is proved by Chorwadwala and Roy \cite{Anisa2020} using a rotating plane method under additional assumptions on the outer and inner boundaries. More precisely, for $n$ even, $p=2$ and $\Om=B\setminus \overline{P}$ as in (\nameref{itm:Domain}), they proved that among all rotations of the inner domain $P$ about its center, $\la_1(\Om)$ is optimum only when an axis of symmetry of $P$ coincides with a diameter of $B$ \cite[Theorem 5.1]{Anisa2020}. Furthermore, authors obtained that $\la_1(\Om)$ attains maximum (minimum) when $P$ is in an ON (OFF) position with respect to $B$ (See Figure \ref{fig:on_off}-(C),(D)). The idea behind the proof of this result differs from those used in \cite{kiwanDihed} as the earlier techniques fail due to the nonconcentric position of the two domains $B$ and $P$. However, authors used a sector reflection technique for $n$ even which helps them to study the behaviour of $\la_1$ on upper and lower hemispheres of the disk $B$. Moreover, the assumption of having even dihedral symmetry of the inner domain $P$ leads to an appropriate pairing of sectors as, for $n$ even, the axes of symmetry of $P$ split $B$ into an even number of sectors in each
hemisphere, upper and lower. Such pairing is no longer complete when $n$ is odd, as pointed out in \cite[Section 6.7]{Anisa2020}. Nevertheless, with numerical evidence \cite[Section 8]{Anisa2020}, authors conjectured that the result, analogous to the  $n$ even case, holds for $n$ odd too. 

In this article, we give an affirmative answer to the aforementioned conjecture. For $n$ odd, we use a mixed type pairing by choosing sectors across the upper and lower hemispheres and establish that the sector reflection technique (Lemma \ref{Sector_reflection_odd}) works with this choice of pairing. This helps us to overcome the difficulty faced in \cite{Anisa2020}. Further, we extend these results to the $p$-Laplacian, $1<p<\infty$, and for all $n\in \N$, $n\geq 2$. To the best of our knowledge, the only result concerning the strict monotonicity of the first eigenvalue of the $p$-Laplacian (with respect to translations of the inner domain $P$) has been obtained in \cite{Anoop2018} when both $B$ and $P$ are balls. For us $P$ is not a ball and hence their proof does not work in our case as it is. Before going to state our first main result, we introduce the class of admissible domains.

Let $\Om$ be as described in
(\nameref{itm:Domain}) with $B$ centered at $(-c,0)$ for $c>0$. In order to study the behaviour of $\lambda_1(\Om)$ with respect to the rotations of the inner domain $P$ about its center, we consider deformations of $\Om$, $\Om \lmap \Om_t$, with the center of $P$ fixed at $o$ as described below.
\begin{equation}\label{t_configuration}
    \Omega_t:= B \setminus \overline{P_t},\;\text{ where }~ P_t:=\rho_t(P),\; t\in[0,2\pi).
\end{equation}

Now we state our first main result for $\Om_t$ as in \eqref{t_configuration}.
\begin{theorem}[Optimal configuration with respect to rotations of P]\label{theorem}
Let $n\in \N$ and $n\geq 2$. For $1<p<\infty$, consider the map $t \lmap \la_1(\Om_t)$ for $t\in [0,2\pi)$, where $\la_1(\Om_t)$ is the first eigenvalue of \eqref{problem} on $\Om_t$. Then the following holds.
\begin{enumerate}[(i)]
    \item For $\frac{3}{2}<p<\infty$, $\la_1(\Om_t)$ achieves its optimal value if and only if an axis of symmetry of $P$ coincides with a diameter of $B$. 
Moreover, $\la_1(\Om_t)$ is maximum only when $P$ is in an ON position with respect to $B$ and $\la_1(\Om_t)$ is minimum only when $P$ is in an OFF position with respect to $B$. Furthermore, between the ON and the OFF configuration, $t \lmap \la_1(\Om_t)$ is a strictly decreasing function.
\item For $1<p\leq \frac{3}{2}$, $\la_1(\Om_t)$  achieves its optimal value when an axis of symmetry of $P$ coincides with a diameter of $B$. Furthermore, $\la_1(\Om_t)$ will be maximum if $P$ is in an ON position with respect to $B$ and $\la_1(\Om_t)$ will be minimum if $P$ is in an OFF position with respect to $B$. Moreover, between the ON and the OFF configuration, $t \lmap \la_1(\Om_t)$ is a decreasing function. 
\end{enumerate}
\end{theorem}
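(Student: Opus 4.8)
The plan is to study the map $t\lmap \la_1(\Om_t)$ through its derivative, which the Hadamard perturbation formula of Section \ref{Hadamard} expresses as a boundary integral. Writing $u_t$ for the positive, $L^p$-normalized first eigenfunction on $\Om_t$ and $V$ for the velocity field of the rotation $\rho_t$, which is tangent to the circles centered at the center $o$ of $P$, the only moving part of $\pa\Om_t$ is $\pa P_t$ (the outer circle $\pa B$ is fixed), so the shape derivative takes the form
\[
\la_1'(t) = -(p-1)\int_{\pa P_t} |\Gr u_t|^p\,\langle V,\nu\rangle \dS,
\]
with $\nu$ the outward unit normal to $\Om_t$ and the precise positive constant coming from Section \ref{Hadamard}. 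By the $C^{1,\al}$-regularity of $u_t$ together with the Hopf-type boundary lemma for $\Dep$, we have $|\Gr u_t|>0$ on the smooth curve $\pa P_t$, so the sign of $\la_1'(t)$ is governed entirely by how the weight $|\Gr u_t|^p$ is distributed against the sign pattern of $\langle V,\nu\rangle$.

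First I would locate the critical configurations. When an axis of symmetry of $P_t$ coincides with the diameter of $B$ through both centers (the $x_1$-axis), the disk $B$ and the domain $P_t$ are both symmetric about that line, hence so is $\Om_t$; by simplicity of $\la_1$ the eigenfunction $u_t$ is then symmetric about the $x_1$-axis as well. Reflection across the $x_1$-axis preserves $|\Gr u_t|^p$ but reverses the rotational field $V$, and therefore reverses $\langle V,\nu\rangle$, so the contributions of the upper and lower halves of $\pa P_t$ cancel and $\la_1'(t)=0$. By the monotonicity assumption (\nameref{itm:A2}) and the alternating structure of inner and outer vertices, the positions at which a half-axis of symmetry of $P_t$ lies along the negative $x_1$-axis are exactly the ON and OFF configurations of Definition \ref{OnOff_position}; thus these, occurring alternately every angle $\tfrac{\pi}{n}$, are precisely the critical points.

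The heart of the argument is fixing the sign of $\la_1'(t)$ at a generic, non-symmetric angle $t$. Using the $n$ axes of symmetry of $P_t$, I would split $\pa P_t$ into $2n$ arcs running between consecutive inner and outer vertices; on each arc $\langle V,\nu\rangle$ has a fixed sign by the monotonicity (\nameref{itm:A2}) of $f$. A reflection $\sigma$ across an axis of symmetry of $P_t$ fixes $P_t$ but moves the fixed circle $\pa B$, and maps a given arc $A$ onto an adjacent arc $\sigma(A)$ on which $\langle V,\nu\rangle$ has the opposite sign (since $\sigma R_{90}\sigma=-R_{90}$). Changing variables, the net contribution of such a paired set of arcs is controlled by the difference $|\Gr u_t|^p-|\Gr(u_t\circ\sigma)|^p$ on $A$, so it reduces to comparing $u_t$ with its reflection $u_t\circ\sigma$. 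Because $B$ is off-center, $\Om_t$ is genuinely asymmetric about this axis, and the weak comparison principle for $\Dep$ orders $u_t$ and $u_t\circ\sigma$ in one definite direction. For $n$ even the paired arcs lie within a single hemisphere of $B$, whereas for $n$ odd this pairing is incomplete and one must instead pair sectors \emph{across} the upper and lower hemispheres, which is exactly the content of the sector-reflection argument in Lemma \ref{Sector_reflection_odd}. Summing the signed contributions over all pairs gives that $\la_1$ decreases from an ON to an OFF position, identifying ON as the maximizer and OFF as the minimizer.

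The main obstacle, and the source of the dichotomy in the statement, is upgrading this comparison from weak to strict. The weak comparison principle alone yields only $\la_1$ nonincreasing, hence the non-strict conclusions of case (ii), valid for all $1<p<\infty$. To obtain strict monotonicity, and thereby the \emph{only if} characterization of the optimal configurations in case (i), one needs $u_t\circ\sigma$ to be \emph{strictly} ordered against $u_t$ away from the axis, which requires the strong comparison principle for the $p$-Laplacian; this is precisely where the threshold $p>\tfrac32$ enters, as it is the range in which that strong comparison principle and the accompanying $C^{1,\al}$-based boundary estimates are available in this setting. I expect the two most delicate points to be (a) a fully rigorous justification of the Hadamard formula given the limited ($C^{1,\al}$) regularity of $u_t$, and (b) verifying the containment and ordering hypotheses needed to apply the (strong) comparison principle on the reflected sub-domains, especially for the mixed cross-hemisphere pairing required when $n$ is odd.
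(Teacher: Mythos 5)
Your proposal is correct and follows essentially the same route as the paper: the Hadamard formula \eqref{hadamard}, pairing of the $2n$ boundary arcs via reflections across the symmetry axes of $P_t$ (with the cross-hemisphere pairing of Lemma \ref{Sector_reflection_odd} for odd $n$), and the weak/strong comparison dichotomy at $p=\tfrac{3}{2}$, exactly as in Propositions \ref{Hopf_lemma} and \ref{Monotonicity_lambda}. The one step you leave implicit---upgrading the strict interior ordering of $u_t$ versus its reflection to a strict ordering of their normal derivatives on $\pa P_t$, where both functions vanish---is handled in the paper by linearizing the $p$-Laplacian along the segment joining the two gradients and applying the strong maximum principle (Proposition \ref{Strong_maximum}) to the difference in a neighborhood of $\pa P_t$, where nondegeneracy of $\nabla u_t$ makes the linearized operator uniformly elliptic.
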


The main ingredients of our proof are $(a)$ the strong comparison principle due to Sciunzi \cite{Sciunzi2014} for $\frac{3}{2}<p<\infty$ and $(b)$ the weak comparison principle due to Chorwadwala, Mahadevan, and Toledo \cite{Anisa2015Toledo} available for $1< p<\infty$ which we use for $1<p\leq \frac{3}{2}$. It is worth mentioning here that the application of the strong comparison principle mentioned above gives a direct proof for the strict monotonicity of $\la_1$ for $\frac{3}{2}<p<\infty$. However, for $1<p\leq \frac{3}{2}$, we obtain a weaker result only, giving the non-strict monotonicity of $\la_1$.

\subsection*{Application:} In this section, we study the structure of the nodal set of a second eigenfunction of the $p$-Laplacian. First, we give the variational characterization of the second eigenvalue, cf. \cite{Anoop2016Drabek,Azorero1987,Drabek1999Robinson}; see \cite{Drabek2020Robinson} for recent developments on the eigenvalues of the $p$-Laplacian. Let $\Om\subset \R^2$ be a bounded domain. Consider the following functionals on $W^{1,p}_0(\Om)$:
\begin{equation}\label{Functional}
    J(u)=\int\limits_{\Om}|\nabla u|^p\dx\;;\; G(u)=\int\limits_{\Om}|u|^p\dx.
\end{equation}
Let $\mathcal{E}=G^{-1}(1)$ and $\mathcal{F}=\{\mathcal{A}\subset \mathcal{E}: \mathcal{A}=\ga(\mathcal{S}^1),\;\ga:\mathcal{S}^1\longmapsto \mathcal{E}\; \text{is an odd and continuous map}\}$, where $\mathcal{S}^1$ is the unit sphere in $\R^2$. Then the second eigenvalue $\la_2$ of \eqref{problem} is characterized by 
\begin{equation}\label{Variational_second}
    \la_2=\inf\limits_{\A\in \E}\sup\limits_{u\in \A}J(u).
\end{equation}
Also, it is well known that an eigenfunction associated to $\la_2$ changes sign in $\Om$; cf. \cite{Lindqvist2006}. Let $u_2$ be a second eigenfunction of \eqref{problem}. The nodal set $\Nn_{u_2}$ of $u_2$ is defined as
$$ \Nn_{u_2}=\overline{\{x\in \Om: u_2(x)=0\}}.$$

For $p=2$, the structure of $\Nn_{u_2}$ is widely investigated after Payne conjectured \cite[Conjecture 5]{Payne1967} that $\Nn_{u_2}$ can not be closed. This conjecture has been proved under various symmetry and convexity assumptions on the domain; see, for instance, \cite{Lin1987,Putter,Jerison,Alessandrini} and references therein. In general, Payne's conjecture is not true; see \cite{Kennedy2013,Nadirashvili} for the counterexamples. In \cite{Anoop2016Drabek}, for $\Om$ a ball in $\R^N$ and for $p$-Laplacian, Anoop, Dr\'{a}bek, and Sasi proved that $\Nn_{u_2}$ can not be a concentric sphere. In the same article, the authors proposed an open problem \cite[Remark 4.2]{Anoop2016Drabek} regarding the validity of the Payne's conjecture for the $p$-Laplacian on a ball in $\R^N$. Later in \cite{Anoop2018}, the authors obtained similar results as in \cite{Anoop2016Drabek} for bounded radial domains in $\R^N$ (i.e., when $\Om$ is a ball or concentric annular region) using a different technique that relies on the strict monotonic behaviour of the first  eigenvalue $\la_1$ in annular regions. In \cite{Bobkov2019Kolonotskii}, Bobkov and Kolonitskii proved the conjecture \cite[Remark 4.2]{Anoop2016Drabek} even for more general domains in $\R^N$ having Steiner symmetry. However, in general, for doubly connected domains, the nature of the set $\Nn_{u_2}$ remains quite open even for the case $p=2$. In \cite{Sarswat2014nodal}, for $p=2$, the authors showed that $\Nn_{u_2}$ touches the boundary for doubly connected domains, which satisfy certain dihedral symmetry. In \cite[Theorem 1.4]{Kiwan2018}, for $p=2$ and for planar doubly connected domains with some convexity and symmetry assumptions, Kiwan proved that $\Nn_{u_2}$ can not enclose the inner hole. As an application of Theorem \ref{theorem}, we obtain an analogous result as in \cite[Theorem 1.4]{Kiwan2018} for the $p$-Laplacian, provided the domain has certain dihedral symmetry. Before giving the precise statement, we introduce a definition.

Let $P$ be a domain satisfying (\nameref{itm:A0}), (\nameref{itm:A1}), and (\nameref{itm:A2}). Let $\ga$ be a simple closed $C^2$ curve enclosing $P$. Let $Q$ denote the domain such that $\pa Q=\ga$. Assume that $Q$ also satisfies (\nameref{itm:A1}) and (\nameref{itm:A2}) with the same order of symmetry as that of $P$.

\begin{definition}\label{On_Off_2} For $P,\ga,Q$ as given above, we say that $P$ is in an ON position with respect to $\ga$ if an \textit{outer vertex} of $P$ lie on a half-axis of symmetry containing an \textit{outer vertex} of $Q$. Similarly, we say that $P$ is in an OFF position with respect to $\ga$ if an \textit{outer vertex} of $P$ lie on a half-axis of symmetry containing an \textit{inner vertex} of $Q$. 
\end{definition}

 Now we state our result.
\begin{theorem}\label{Nodal_theorem}
Let $P$ satisfy (\nameref{itm:A1}) for some fixed $n_0\in \N$, $n_0\geq 2$. Consider $\Om=B\setminus \overline{P}$ as in (\nameref{itm:Domain}), where $B$ is centered at $(-c,0)$ for some $c\geq 0$ and the center $o$ of $P$ is the origin $(0,0)$. Assume that $\frac{3}{2}<p<\infty$ and $P$ is not in an OFF position with respect to $B$. Suppose that  the nodal set $\Nn_{u_2}$ of a second eigenfunction $u_2$ of \eqref{problem} is a  $C^2$ curve satisfying (\nameref{itm:A1}) (for $n_0$), (\nameref{itm:A2}) and is concentric with $P$. Then, $\Nn_{u_2}$ can not have $P$ in an ON position with respect to it.
\end{theorem}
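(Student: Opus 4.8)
The plan is to derive a contradiction by using Theorem \ref{theorem} to compare the first eigenvalue across two configurations that share the same inner-region profile but differ only by a rotation of $P$. Suppose, for contradiction, that $\Nn_{u_2}$ is a $C^2$ curve satisfying (\nameref{itm:A1}) for $n_0$, (\nameref{itm:A2}), concentric with $P$, and that $P$ is in an ON position with respect to $\ga:=\Nn_{u_2}$. Write $Q$ for the domain bounded by $\ga$, so that $P\subset Q\subset B$. Since $u_2$ changes sign and vanishes on $\ga$, the restriction of $u_2$ to each of the two nodal components is a first eigenfunction of \eqref{problem} on that component, and the associated eigenvalue equals $\la_2(\Om)$ on each. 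In particular, the nodal domain $A:=Q\setminus\overline{P}$ (the annular region between $\pa P$ and $\ga$) is itself a domain of the type (\nameref{itm:Domain}) with outer boundary $\ga$ and inner hole $P$, and $\la_1(A)=\la_2(\Om)$.

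First I would set up the rotation family for $A$: by concentricity of $\ga$ and $P$, the domain $A_t:=Q\setminus\overline{P_t}$ with $P_t=\rho_t(P)$ is again admissible, and $A=A_{t_{\mathrm{ON}}}$ corresponds to the ON configuration in the sense of Definition \ref{On_Off_2}. The crucial observation is that rotating $P$ inside the fixed region $Q$ does not change $\la_2(\Om)$: because $\ga=\Nn_{u_2}$ is itself $\mathbb{D}_{n_0}$-symmetric and concentric with $P$, the whole configuration possesses the discrete rotational symmetry, so the value $\la_1(A_t)$ at the symmetric (ON and OFF) positions is forced to equal the fixed number $\la_2(\Om)$ at the ON position; more precisely, I would argue that the ON configuration $A$ realizes $\la_2(\Om)$ as a first eigenvalue of an annular domain of type (\nameref{itm:Domain}).

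Next I would invoke Theorem \ref{theorem}(i) applied to the family $t\lmap\la_1(A_t)$ on the domain $Q\setminus\overline{P_t}$, valid since $\frac{3}{2}<p<\infty$. That theorem asserts that among all rotations, $\la_1$ is \emph{strictly} maximized precisely at the ON position and strictly minimized at the OFF position, and is strictly monotone between them. Hence the ON value $\la_1(A)$ strictly exceeds the OFF value $\la_1(A_{t_{\mathrm{OFF}}})$, so $\la_1(A)>\la_1(A_{t_{\mathrm{OFF}}})$. On the other hand, I would compare these annular eigenvalues with $\la_2(\Om)$ through a variational/monotonicity argument: the OFF annular configuration yields, via an admissible test pair built from the two nodal components (and a suitable genus-based cover in the characterization \eqref{Variational_second}), a strictly smaller value, contradicting the minimality of $\la_2(\Om)$ among such nodal splittings. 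The hard part, and the step I expect to be the main obstacle, is making rigorous the claim that the ON-position nodal curve forces $\la_2(\Om)$ to coincide with the strict maximum of $t\lmap\la_1(A_t)$ while a competing OFF-type splitting furnishes a strictly smaller upper bound for $\la_2(\Om)$; this requires carefully verifying that the rotated inner hole $P_t$ still fits inside $Q$ (using assumption (\nameref{itm:A2}) and the monotonicity of $f$), that the reflected/rotated nodal curve produces a genuine competitor in the minimax \eqref{Variational_second}, and that the strict comparison principle underlying Theorem \ref{theorem}(i) transfers to equality of eigenvalues across the two nodal pieces. Once this strict inequality is in place, the contradiction with Theorem \ref{theorem}(i) is immediate, proving that $P$ cannot be in an ON position with respect to $\Nn_{u_2}$.
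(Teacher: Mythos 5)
Your proposal has the right overall contradiction scheme (play the strict ON/OFF comparison against the characterization \eqref{Variational_second}), but it breaks down at the central construction: you rotate the obstacle $P$ inside the fixed nodal region $Q$, producing $A_t=Q\setminus\overline{P_t}$. The domain $\Om=B\setminus\overline{P}$ is \emph{given}, and $A_t\not\subset\Om$ for $t\neq 0$ (it contains the points of $P\setminus\overline{P_t}$, which lie in the hole), so the first eigenfunction of the OFF configuration $A_{t_{\mathrm{OFF}}}$ does not extend by zero to an element of $W^{1,p}_0(\Om)$ and cannot be used to build a competitor for $\la_2(\Om)$ in \eqref{Variational_second}. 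The paper rotates the other object: it keeps $\Om$ fixed and rotates the nodal curve, setting $(\Om_1)_t$ to be the region between $\pa P$ and $\rho_t(\Nn_{u_2})$ and $(\Om_2)_t=\Om\setminus\overline{(\Om_1)_t}$; these are genuinely disjoint subdomains of $\Om$, so the odd loop $\A$ spanned by the zero extensions $\widehat{\psi}_1,\widehat{\psi}_2$ of their first eigenfunctions is admissible in \eqref{Variational_second}. (Since $P$ and $Q$ are concentric, rotating the curve by $t$ is isometric to rotating $P$ by $-t$ inside $Q$, so your inner-annulus comparison survives in that form — but note it needs Remark \ref{Final_remark}-(i), the concentric two-domain extension, not Theorem \ref{theorem} itself, because the outer boundary of $Q\setminus\overline{P_t}$ is $Q$, not a disk.)

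There are two further gaps. First, you never control the outer nodal piece and never use the hypothesis that $P$ is not in an OFF position with respect to $B$. Rotating the splitting curve changes $B\setminus\overline{\rho_t(Q)}$ as well (when $c>0$), and the contradiction requires \emph{both} $\la_1((\Om_1)_t)<\la_2$ and $\la_1((\Om_2)_t)<\la_2$; the second strict inequality is Theorem \ref{theorem}-(i) with inner obstacle $Q$ moved into OFF position relative to $B$, and its strictness is exactly where the unused hypothesis enters (it guarantees $\Nn_{u_2}$ is not already OFF with respect to $B$). Relatedly, your intermediate claim that symmetry ``forces'' $\la_1(A_t)$ at the ON and OFF positions to both equal $\la_2$ is false — the $\mathbb{D}_{n_0}$ symmetry has period $\frac{2\pi}{n_0}$ while ON and OFF differ by $\frac{\pi}{n_0}$ — and it contradicts the strict comparison you invoke in the next sentence. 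Second, the case $c=0$ needs separate treatment, which your argument cannot produce: there $\la_1((\Om_2)_t)=\la_1(\Om_2)=\la_2$ by rotational invariance, so the naive bound only yields $\sup_{\psi\in\A}J(\psi)\leq\la_2$ and no contradiction; the paper resolves this by shrinking the inner annulus to a slightly smaller domain $A\subsetneq(\Om_1)_t$ with $\la_1((\Om_1)_t)<\la_1(A)<\la_2$ (continuity of $\la_1$ under perturbation) so that the complementary piece strictly contains $(\Om_2)_t$ and strict domain monotonicity restores both strict inequalities.
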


 The rest of the article is organized as follows. In Section \ref{preliminaries}, we recall some basic facts about the geometry of simply connected planar domains, and we mention a few auxiliary results, which is essential for the rest of the article. The sector reflection technique for the odd case is introduced in Section \ref{Mainresult}. We present the proof of Theorem \ref{theorem} and Theorem \ref{Nodal_theorem} in this section.
\section{Preliminaries}\label{preliminaries}
In this section, first, we introduce some notations which will be used later. Next, we mention a few elementary facts concerning the representation of normal to a planar domain on its boundary. Then we recall the shape derivative formula and some properties of the first eigenvalue of \eqref{problem}. Finally, we list a few comparison principles which will be needed in later sections. 

Throughout this article, we assume $n\geq 2$, $n \in \N$. Let $z_\alpha := \{ r e^{i \alpha}: r \in \mathbb{R}\}$, for $\alpha \in [0, 2 \pi]$, denote the line in $\mathbb{R}^2$ passing through origin $o$ making an angle $\alpha$ with the positive $x_1$-axis. Let $R_{\alpha}:\mathbb{R}^2 \rightarrow \mathbb{R}^2$ denote the reflection about the line $z_{\alpha}$. Observe that $z_\alpha = z_{\alpha+ \pi}$ for each $\alpha \in [0, 2\pi]$, where the addition is taken modulo $2\pi$. For a fixed $t \in \mathbb{R}$ and $a, b \in \mathbb{Z}$, $a< b$, we denote by  $\sigma( {t+\frac{a\pi}{n},t+\frac{b\pi}{n}})$, the sector delimited by the lines $z_{t+\frac{a\pi}{n}}$ and $z_{t+\frac{b\pi}{n}}$. More precisely,
\begin{equation}\label{sector}
     \sigma\left( {t+\frac{a\pi}{n},t+\frac{b\pi}{n}}\right):= \bigg\{ r e^{i \phi} \in  \mathbb{R}^2 : \phi \in \left(t+\frac{a \pi}{n},t+\frac{b\pi}{n}\right),\; r \in \mathbb{R} \bigg\}.
\end{equation}
For simplicity, we will write $\sigma_{(a,b)}$ to denote $\sigma\left({t+\frac{a\pi}{n},t+\frac{b\pi}{n}}\right)$ when $t$ is fixed. Next, recall that $\Om$ is as in (\nameref{itm:Domain}). Let $\Om_t$ be as mentioned in \eqref{t_configuration}. Now for $t\in[0,2\pi)$ and $k=0,1,\dots,2n-1$, we consider the following sectors of $\Om_t$:
\begin{equation}\label{Sectors}
    H_k(t):= \Om_t \cap \sigma_{(k,k+1)}\;;\;\widetilde{H}_k(t):= \overline{\Om_t} \cap \sigma_{(k,k+1)}.
\end{equation}

 For $\al\in[0,2\pi)$, let $S_\al$ denotes the half space containing $e_1=(1,0)$ such that $\pa S_\al=z_\al$. Then we define the following subdomains of $\Om_t$:
\begin{equation}\label{Half_space}
    O_\al^+(t)=S_\al \cap \Om_t\;;\;O_\al^-(t)=(\mathbb{R}^2\setminus S_\al) \cap \Om_t.
\end{equation}
We observe that $O_\al(t)$ (See Figure \ref{fig:Reflection}-(A)) is nothing but the smaller component of the two unequal parts of $\Om_t$ divided by $z_\al$.

Let $P$ be parametrized in polar coordinates in the following way:
\begin{equation}\label{P_parametrization}
P=\{ re^{\textbf{i}\phi}\, :\, \phi\in [0,2\pi), 0 \leq r < f(\phi)\}
\end{equation}
where $f: [0,2 \pi] \rightarrow [0, \infty)$ is a $\mathcal{C}^2$ map with $f(0)=f(2 \pi)$.

\subsection{Parametrization of $B$ and $P$:}\label{parametrization}
First, we recall a monotonicity property of the boundary of a disk, and then we mention a formula for the representation of the normal to a planar simply connected domain on its boundary.\par 
Let $B$ be centered at $(-c,0),\;c>0$. Let $r_1\;(r_1 >c)$ denote the radius of $B$. We consider the following parametrization of $B$ in polar coordinates with respect to $o=(0,0)$. 
\begin{equation}\label{disc_parametrization}
    B=\{re^{i\phi}: \phi\in[0,2\pi), 0\leq r < g(\phi)\},
\end{equation}
 where $g: [0,2 \pi] \rightarrow [0, \infty)$ is a $\mathcal{C}^2$ map with $g(0)= g(2 \pi)$ and the polar coordinates $(r,\phi)$ are measured with respect to the origin $o$ and the positive $x_1$-axis of $\mathbb{R}^2$. Then the following monotonicity property \cite[Lemma 4.1]{Anisa2020} holds on the boundary of the disk $B$.
 \begin{lemma}\label{disc_monotonicity}
 Let $\delta(\phi)$ denote the distance of a point $g(\phi) \, e^{i \phi}$ on $\partial B$ from $o=(0,0)$. Then  $\phi \mapsto \delta(\phi)$ is a strictly increasing function of $\phi$ in $[0,\pi]$ and is a strictly decreasing function of $\phi$ in $[\pi,2\pi]$.
 \end{lemma}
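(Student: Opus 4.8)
The final statement to prove is Lemma~\ref{disc_monotonicity}, a purely geometric fact about a disk $B$ centered at $(-c,0)$ with $c>0$, viewed in polar coordinates about the origin $o=(0,0)$ lying inside $B$. We are asked to show that $\delta(\phi)=g(\phi)$, the distance from $o$ to the boundary point of $B$ in direction $\phi$, strictly increases on $[0,\pi]$ and strictly decreases on $[\pi,2\pi]$.

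\begin{proof}[Proof proposal]
The plan is to write down $g(\phi)$ explicitly by intersecting the ray from $o$ in direction $\phi$ with the circle $\pa B$, and then differentiate. Since $B$ is the disk of radius $r_1$ centered at $(-c,0)$, a point $re^{\mathbf{i}\phi}=(r\cos\phi,\,r\sin\phi)$ lies on $\pa B$ precisely when $(r\cos\phi+c)^2+(r\sin\phi)^2=r_1^2$, i.e. $r^2+2cr\cos\phi+c^2-r_1^2=0$. Solving this quadratic in $r$ and keeping the positive root (the ray meets the boundary once on the positive-$r$ side because $o$ is interior, as $r_1>c$), I obtain
\begin{equation*}
g(\phi)=-c\cos\phi+\sqrt{r_1^2-c^2\sin^2\phi}.
\end{equation*}
This closed form is the crux: everything else is a one-variable calculus check.

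Next I would differentiate. A direct computation gives
\begin{equation*}
g'(\phi)=c\sin\phi-\frac{c^2\sin\phi\cos\phi}{\sqrt{r_1^2-c^2\sin^2\phi}}
= c\sin\phi\left(1-\frac{c\cos\phi}{\sqrt{r_1^2-c^2\sin^2\phi}}\right).
\end{equation*}
The sign of $g'(\phi)$ is therefore governed by two factors. The factor $\sin\phi$ is positive on $(0,\pi)$ and negative on $(\pi,2\pi)$. For the bracketed factor, I claim it is strictly positive for all $\phi$: since $r_1>c$ we have $r_1^2-c^2\sin^2\phi\ge r_1^2-c^2>0$, and comparing squares, $c\cos\phi<\sqrt{r_1^2-c^2\sin^2\phi}$ is equivalent (when $\cos\phi\ge 0$) to $c^2\cos^2\phi<r_1^2-c^2\sin^2\phi$, i.e. $c^2<r_1^2$, which holds; when $\cos\phi<0$ the inequality is trivial since the right side is positive. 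Hence the bracket never vanishes and is always positive, so $g'(\phi)$ has exactly the sign of $\sin\phi$. This yields $g'>0$ on $(0,\pi)$ and $g'<0$ on $(\pi,2\pi)$, giving strict monotonicity on the two half-intervals as claimed.

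I do not anticipate a serious obstacle here; the only point needing care is justifying the choice of root and the strict positivity of the bracketed factor, both of which reduce to the standing hypothesis $r_1>c$ (equivalently, $o$ lies strictly inside $B$). Since the lemma is quoted from \cite[Lemma 4.1]{Anisa2020}, an alternative is simply to cite it; but the self-contained computation above is short and transparent. One could also give a coordinate-free argument: as $\phi$ runs from $0$ to $\pi$, the chord of $B$ through $o$ in direction $\phi$ has its far endpoint moving monotonically, a consequence of the convexity of the disk and the fixed interior basepoint $o$ off-center along the negative $x_1$-axis. However, the explicit formula is the cleanest route and makes the strictness immediate.
\end{proof}
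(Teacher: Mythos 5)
Your proof is correct. The paper itself gives no proof of this lemma---it is imported verbatim as \cite[Lemma 4.1]{Anisa2020}---and your argument is the standard direct one underlying that reference: intersecting the ray at angle $\phi$ with the circle gives the quadratic $r^2+2cr\cos\phi+c^2-r_1^2=0$, whose positive root (the product of roots $c^2-r_1^2$ is negative, so the choice is forced) yields $g(\phi)=-c\cos\phi+\sqrt{r_1^2-c^2\sin^2\phi}$, and your sign analysis of $g'(\phi)=c\sin\phi\left(1-\frac{c\cos\phi}{\sqrt{r_1^2-c^2\sin^2\phi}}\right)$, including the strict positivity of the bracket from $r_1>c$, correctly gives $g'>0$ on $(0,\pi)$ and $g'<0$ on $(\pi,2\pi)$, hence strict monotonicity on the closed intervals by continuity.
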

\par
Let $D\subset\mathbb{R}^2$ be a simply connected bounded domain which is given by
\begin{equation*}
    D=\{re^{i\phi}: \phi\in[0,2\pi), 0\leq r < h(\phi)\},
\end{equation*}
where $h$ is bounded and $2\pi$-periodic function of class $\mathcal{C}^2$. Let $v \in \mathcal{C}_0^\infty(\mathbb{R}^2)$ be a vector field that rotates $D$ by a right angle about the origin in the anticlockwise direction. Thus restriction of $v$ to $\partial D$ is given by 
$v(x_1,x_2) = (-x_2,x_1) \; \forall\; (x_1,x_2) \in \pa D$. This gives
$v \left( h(\phi) \left(\cos \phi, \sin \phi \right) \right)= h(\phi) \left(-\sin \phi, \cos \phi \right) \; \forall \;\phi \in [0, 2 \pi).$ Now viewing $\mathbb{R}^2$ as the complex plane $\mathbb{C}$, we can represent $v$ as $v(\zeta) = \textbf{i}\zeta \;\text{for all}\; \zeta = h(\phi)e^{\textbf{i}\phi} \in \partial D,$ which is equivalent to saying that $v(\phi):= v\left( h(\phi)e^{\textbf{i}\phi}\right)= \textbf{i} h(\phi)  e^{\textbf{i} \phi} \; \text{for all}\; \phi \in \mathbb{R}.$ \par
 Let $\eta$ denotes the unit outward normal to $D$ on $\pa D$. Now we state a lemma \cite[Lemma 4.2]{Anisa2020} without proof, which will be crucial for the proof of our main results.
\begin{lemma}\label{normal_presentation}
Let $D, h, v, \eta$, and $z_\alpha$ be as defined above. Then at any point $ h(\phi)e^{\textbf{i}\phi}$ of $\partial K$, we have the following:
\begin{enumerate}[(i)]
\item $\eta(\phi) := \eta(h(\phi)e^{\textbf{i}\phi}) = \dfrac{h(\phi)e^{\textbf{i}\phi}-\textbf{i}h^\prime(\phi)e^{\textbf{i}\phi}}{\sqrt{h^2(\phi)+(h^\prime(\phi))^2}} \; \forall \phi \in \mathbb{R}$.
\item $\left<\eta , v\right>(\phi) := \left<\eta , v\right> (h(\phi)e^{\textbf{i}\phi})  = \dfrac{-h(\phi)h^\prime(\phi)}{\sqrt{h^2(\phi)+(h^\prime(\phi))^2}} \; \forall \phi \in \mathbb{R}$. Thus $\left<\eta , v\right>$ has a constant sign on an interval $I \subset \mathbb{R}$ iff $h$ is monotonic in $I$.
\item If for some $\alpha \in [0, 2 \pi)$, the domain $D$ is symmetric with respect to the axis $z_{\alpha}$ then, for each $\theta \in [0, \pi]$,
$
\left<\eta , v\right>(\alpha+\theta) = -\left<\eta , v\right>(\alpha-\theta).
$
\end{enumerate}
\end{lemma}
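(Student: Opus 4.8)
The plan is to treat all three parts by direct computation in the complex plane, using the identification of $\mathbb{R}^2$ with $\mathbb{C}$ already fixed in the setup. First I would parametrize the boundary $\partial D$ by $\gamma(\phi) = h(\phi)e^{\mathbf{i}\phi}$ and differentiate to obtain the tangent vector $\gamma'(\phi) = (h'(\phi) + \mathbf{i} h(\phi))e^{\mathbf{i}\phi}$, whose modulus is exactly $\sqrt{h^2(\phi)+(h'(\phi))^2}$. Since $\partial D$ is traversed counterclockwise as $\phi$ increases, the outward unit normal is obtained by rotating the unit tangent clockwise by a right angle, i.e.\ by multiplying by $-\mathbf{i}$. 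This yields $-\mathbf{i}\,\gamma'(\phi) = (h(\phi) - \mathbf{i} h'(\phi))e^{\mathbf{i}\phi}$, and dividing by its modulus (which equals that of $\gamma'$) gives the formula in (i). To pin down the orientation rather than merely assert it, I would verify the constant case $h\equiv$ const, a circle centered at the origin, where the formula reduces to the radial field $e^{\mathbf{i}\phi}$, which is indeed outward.

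For (ii), recalling that $v(\phi) = \mathbf{i}\, h(\phi)e^{\mathbf{i}\phi}$ and that the Euclidean inner product of two complex numbers $z,w$ is $\operatorname{Re}(z\bar w)$, I would compute $\eta(\phi)\,\overline{v(\phi)}$. The exponential factors cancel, leaving $\dfrac{(h - \mathbf{i} h')(-\mathbf{i}h)}{\sqrt{h^2+(h')^2}}$, whose real part is $-hh'$; this gives the stated expression for $\langle\eta,v\rangle(\phi)$. The sign consequence is then immediate: the prefactor $h/\sqrt{h^2+(h')^2}$ is strictly positive, since $h>0$ on $\partial D$, so the sign of $\langle\eta,v\rangle$ coincides with that of $-h'$. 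Hence $\langle\eta,v\rangle$ has constant sign on an interval $I$ if and only if $h'$ does not change sign on $I$, which is exactly monotonicity of $h$ on $I$.

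For (iii), I would first translate the geometric hypothesis into a functional symmetry of $h$. Invariance of $D$ under the reflection $R_\alpha$ about the axis $z_\alpha$ means the radius function is even about $\alpha$, namely $h(\alpha+\theta) = h(\alpha-\theta)$ for all $\theta$; differentiating gives $h'(\alpha+\theta) = -h'(\alpha-\theta)$. Substituting these two relations into the formula from (ii), the numerator $-h(\alpha+\theta)h'(\alpha+\theta)$ becomes $+h(\alpha-\theta)h'(\alpha-\theta)$, while the denominator is unchanged because it depends on $h'$ only through its square. The result is precisely $-\langle\eta,v\rangle(\alpha-\theta)$, as claimed.

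I expect no serious obstacle here; the computation is elementary. The only two points requiring genuine care are fixing the correct orientation so that the normal in (i) points outward rather than inward, which I would settle via the circle sanity check, and correctly recording that the Euclidean pairing corresponds to $\operatorname{Re}(z\bar w)$ (and not $\operatorname{Re}(zw)$) when passing to complex notation, since an error there would flip or distort the sign in (ii) and invalidate the monotonicity characterization on which the later results depend.
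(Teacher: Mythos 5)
Your proof is correct: the tangent vector $\gamma'(\phi)=(h'(\phi)+\mathbf{i}h(\phi))e^{\mathbf{i}\phi}$, the clockwise rotation by $-\mathbf{i}$ to get the outward normal (with the circle check fixing orientation), the pairing $\langle z,w\rangle=\operatorname{Re}(z\bar w)$ yielding $-hh'/\sqrt{h^2+(h')^2}$, and the symmetry relation $h(\alpha+\theta)=h(\alpha-\theta)$ differentiated to $h'(\alpha+\theta)=-h'(\alpha-\theta)$ are all sound, and the positivity of $h$ makes the monotonicity equivalence in (ii) immediate. The paper itself states this lemma without proof, citing \cite[Lemma 4.2]{Anisa2020}, and your direct computation is precisely the standard argument given there, so there is nothing to add.
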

 
\subsection{Initial position:}\label{Initial_position} Let $\Om$ be as in (\nameref{itm:Domain}). Here we fix the initial configuration, that is, the configuration at $t=0$. Let $P_t$ be in an OFF position with respect to $B$ at $t=0$. Then using parametrization \eqref{P_parametrization} of $P$, we observe that $f$ is an increasing function of $\phi$ on $({0,\frac{\pi}{n}})$ for $n$ even and is a decreasing function of $\phi$ on $({0,\frac{\pi}{n}})$ for $n$ odd. Also for $t\in [0,2\pi)$, the boundary of $P_t$ is given by,
\begin{equation*}
    \pa P_t:= \{f(\phi-t) e^{i \phi}: \phi \in [0, 2\pi)\}.
\end{equation*}
\subsection{Sector reflections for $n$ even:} The following sector reflection technique  is used in \cite[Lemma 6.1]{Anisa2020} for $P$ satisfying dihedral symmetry when $n$ is even.
\begin{lemma}\label{Sector_reflection_even}
Let $n\geq 2$ be even. Let $R_\al$ denote the reflection about the line $z_\al$. Then
\begin{enumerate}[(i)]
    \item \label{Containment_1_even} for  $k =0,2,4,\dots,n-2$, $$R_{t+\frac{(k+1)\pi}{n}}(H_k(t))\subsetneq H_{k+1}(t) \;\text{and}\; R_{t+\frac{(k+1)\pi}{n}}(\widetilde{H}_k(t)) \subsetneq \widetilde{H}_{k+1}(t) \setminus \partial B.$$
    \item \label{Containment_2_even} for $k=n, n+2,\dots, 2n-2$, $$R_{t+\frac{(k+1)\pi}{n}}(H_{k+1}(t)) \subsetneq H_{k}(t)\;\text{and}\; R_{t+\frac{(k+1)\pi}{n}}(\wide{H}_{k+1}(t)) \subsetneq \wide{H}_{k}(t) \setminus \pa B.$$
\end{enumerate}
\end{lemma}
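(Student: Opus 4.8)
The plan is to reduce both sector containments to a one–dimensional comparison between the radial function of the disk $B$ and that of its reflection, and then to read off the inequality from the monotonicity of $\delta$ recorded in Lemma~\ref{disc_monotonicity}. Throughout I fix $k$, set $\alpha:=t+\frac{(k+1)\pi}{n}$ and $R:=R_\alpha$, and use that $z_\alpha$ is an axis of symmetry of $P_t$, so that $R(\overline{P_t})=\overline{P_t}$.

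First I would carry out the symmetry reduction. Since $R$ maps the open sector $\sigma_{(k,k+1)}$ bijectively onto the adjacent sector $\sigma_{(k+1,k+2)}$ (both have opening $\frac{\pi}{n}$ and share the edge $z_\alpha$), and $R$ is a bijection of $\R^2$ fixing $\overline{P_t}$, writing $H_k(t)=(B\cap\sigma_{(k,k+1)})\setminus\overline{P_t}$ gives
\[
R(H_k(t))=\big(R(B)\cap\sigma_{(k+1,k+2)}\big)\setminus\overline{P_t}.
\]
Comparing this with $H_{k+1}(t)=(B\cap\sigma_{(k+1,k+2)})\setminus\overline{P_t}$, the inclusion asserted in (i) becomes equivalent to the purely geometric statement $R(B)\cap\sigma_{(k+1,k+2)}\subsetneq B\cap\sigma_{(k+1,k+2)}$; the analogous reduction applies in (ii), with the roles of $H_k(t)$ and $H_{k+1}(t)$ interchanged.

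Next I would translate this into a radial comparison. Using $\partial B=\{\delta(\phi)e^{\mathbf{i}\phi}\}$ and $R_\alpha(\delta(\phi)e^{\mathbf{i}\phi})=\delta(\phi)e^{\mathbf{i}(2\alpha-\phi)}$, the reflected disk $R(B)$ is described inside the sector by the radial function $\psi\mapsto\delta(2\alpha-\psi)$. Hence, writing a point of $\sigma_{(k+1,k+2)}$ as $\psi=\alpha+\theta$ with $\theta\in(0,\tfrac{\pi}{n})$, the inclusion in (i) holds exactly when $\delta(\alpha-\theta)<\delta(\alpha+\theta)$ for every such $\theta$. For the index range in (i) the edge $z_\alpha$ together with its whole neighbourhood $\alpha\pm\theta$ lies in the arc on which, by Lemma~\ref{disc_monotonicity}, $\delta$ is strictly increasing; since $\alpha-\theta<\alpha+\theta$, this yields the required strict inequality. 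For (ii) the corresponding edges lie in the arc where $\delta$ is strictly decreasing, and reflecting the farther sector $H_{k+1}(t)$ (larger $\psi$) back onto $H_k(t)$ produces the comparison $\delta(\alpha+\theta)<\delta(\alpha-\theta)$, which again holds by Lemma~\ref{disc_monotonicity}. In both cases the strictness is automatic, since the equality $\delta(2\alpha-\psi)=\delta(\psi)$ forces $\theta=0$, i.e.\ the edge itself, which is excluded from the open sector.

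Finally I would upgrade to the closed statements with the $\setminus\partial B$ refinement. The only points of $\widetilde H_k(t)=\overline{\Om_t}\cap\sigma_{(k,k+1)}$ lying on $\partial B$ form the open arc $\{\delta(\psi)e^{\mathbf{i}\psi}:\psi\in\sigma_{(k,k+1)}\}$; by the strict inequality just established their $R$–images have radial coordinate $\delta(2\alpha-\psi')<\delta(\psi')$ and therefore lie strictly inside $B$, so that $R(\widetilde H_k(t))$ meets $\partial B$ nowhere. Together with the containment $R(\widetilde H_k(t))\subseteq\widetilde H_{k+1}(t)$ (which follows as in the open case, the $\partial P_t$ part being fixed by symmetry), this gives $R(\widetilde H_k(t))\subsetneq\widetilde H_{k+1}(t)\setminus\partial B$, and symmetrically in (ii). The \emph{main obstacle}, and the reason the statement splits into the two index ranges with opposite inclusion directions, is precisely the bookkeeping in the previous step: one must check that each even–index pairing keeps the edge $z_\alpha$ and its $\pm\theta$ neighbourhood inside a single monotonicity arc of $\delta$, never straddling the extremal directions $\phi=0$ or $\phi=\pi$ at which $\delta$ reverses its monotonic behaviour. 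Once this is verified, Lemma~\ref{disc_monotonicity} delivers the inclusions and their strictness simultaneously.
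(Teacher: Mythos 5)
Your overall strategy---using $R_\al(\overline{P_t})=\overline{P_t}$ to reduce both containments to a comparison of $B$ with $R_\al(B)$ inside a sector, and then reading the radial inequality off Lemma \ref{disc_monotonicity}---is exactly the mechanism the paper relies on: it imports this lemma from \cite[Lemma 6.1]{Anisa2020} and reproduces the same scheme in its proof of the odd analogue, Lemma \ref{Sector_reflection_odd}. However, your justification of the key inequality $\delta(\al-\theta)<\delta(\al+\theta)$ has a genuine gap. You claim, and indeed single out as the ``main obstacle'' to be verified, that for every admissible $k$ the interval $(\al-\theta,\al+\theta)$, $\theta\in(0,\frac{\pi}{n})$, stays inside a single monotonicity arc of $\delta$, never straddling $\phi=0$ or $\phi=\pi$. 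This is false precisely for the extremal pairs in the relevant regime $t\in(0,\frac{\pi}{n})$: in (i) with $k=n-2$ one has $\al=t+\frac{(n-1)\pi}{n}$, and $\al+\theta$ ranges up to $t+\pi>\pi$, so the image sector $\sigma_{(n-1,n)}$ straddles the negative $x_1$-axis; in (ii) with $k=2n-2$ one has $\al=t+2\pi-\frac{\pi}{n}$, and $\al+\theta$ exceeds $2\pi$, so $\sigma_{(2n-1,2n)}$ straddles the positive $x_1$-axis (for $n=2$ these straddling pairs are the only pairs). Since $\delta$ reverses its monotonicity at these directions, Lemma \ref{disc_monotonicity} alone does not give the comparison there, and your strictness argument (``equality forces $\theta=0$'') also fails on a straddled arc, where $\delta(2\al-\psi)=\delta(\psi)$ is not a priori excluded for $\theta\neq 0$.

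The inequality is nevertheless true for the straddling pairs, but it requires the additional input that $B$ is centered on the $x_1$-axis, hence $\delta(\phi)=\delta(2\pi-\phi)$, together with periodicity and the strict bound $\al<\pi$ (resp.\ $\al<2\pi$). For instance, for $k=n-2$: if $\al+\theta>\pi$, write $\delta(\al+\theta)=\delta(2\pi-\al-\theta)$; then both $\al-\theta$ and $2\pi-\al-\theta$ lie in $[0,\pi]$, and $\al-\theta<2\pi-\al-\theta$ holds exactly because $\al<\pi$, which is strict since $\al=\pi$ would force $t=\frac{\pi}{n}$; now strict monotonicity on $[0,\pi]$ applies. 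An analogous computation with $\al<2\pi$ settles $k=2n-2$. This wrap-around argument is precisely what the paper carries out for its two exceptional pairs $k=2n-1$ and $k=n-2$ in the proof of Lemma \ref{Sector_reflection_odd} (the manipulation $\beta=2t+2\pi-\theta$, the verification that $\beta$ and $2\pi-\theta$ lie in $[\pi,2\pi]$, and the use of the $2\pi$-periodicity of $g$). With this repair for the two extremal pairs, the remainder of your argument---the symmetry reduction via $R_\al(\overline{P_t})=\overline{P_t}$, the radial description of $R_\al(B)$, and the $\setminus\partial B$ refinement for the closed sectors---is sound and matches the paper's route.
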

\subsection{Shape derivative formula:}\label{Hadamard} Let $\Om_t$ be as in \eqref{t_configuration}. We set $\la_1(t):=\la_1(\Om_t)$, where $\la_1(\Om_t)$ is the first eigenvalue of \eqref{problem} on $\Om_t$. Also let $u_t$ be the positive eigenfunction associated to $\la_1(t)$ such that $||u_t||_p=1$. Let us choose a vector field $v \in \mathcal{C}_0^\infty(\Omega_t)$ defined as
\begin{equation}\label{vector_field}
v(\zeta) = \kappa(\zeta) \, \textbf{i}\zeta\; \text{for all} \; \zeta \in  \mathbb{C} \cong \mathbb{R}^2,
\end{equation}
where $\kappa:\mathbb{R}^2 \rightarrow [0,1]$ is a smooth function with compact support in $B$ such that $\kappa \equiv 1$ in a neighborhood of $P_t$ for all $t\in [0,2\pi)$. Then by the Hadamard perturbation formula (cf. \cite{Anisa2015,Melian2001}), we see that the map $t \mapsto \lambda_1(t)$ is a differentiable function, and its derivative is given by
\begin{equation}\label{hadamard}
    \la_1^\prime(t) =- (p-1)\int\limits_{x \in \partial P_t} \left|{\dfrac{\partial u_t}{\partial \eta_t}}(x)\right|^p \left<\eta_t, v\right>(x)\dS,
\end{equation}
where $\dS$ is the line element on $\partial P_t$ and $\eta_t(x)$ is the unit outward normal vector to $\Omega_t$ at $x \in \partial\Omega_t$.
 
\subsection{Properties of the first eigenvalue:}\label{Properties_lambda} 
Here, we list some properties of the map $t \longmapsto \la_1(t)$.
\begin{enumerate}[(i)]
    \item\label{Periodic Map} (Periodicity) Clearly, $P_{t+ 2 \pi} =P_t$, and hence $\Om_{t+2 \pi} =\Om_t$ for $t \in \R$. By the dihedral symmetry of $P$, we have $P_{t+  \frac{2 \pi}{n}} =P_t$, and hence $\Om_{t+\frac{2 \pi}{n}} =\Om_t$, not just for $t \in [0,2\pi)$, but also for $t \in \R$. Thus $\la_1(t+\frac{2\pi}{n})=\la_1(t)\;\forall\;t\in \R$. Therefore, the map $t\lmap\la_1(t)$ on $\R$ 
    is a periodic function with period $\frac{2\pi}{n}$.
    \item \label{Evenness} (Even Function) Since reflection about an axis of symmetry of $P$ is an isometry of $P$, $P_{ \frac{k\,\pi}{n}+t} =P_{ \frac{k\, \pi}{n}-t}$, $k \in \mathbb{Z}$. Hence, $\Om_{ \frac{k\,\pi}{n}+t} =\Om_{ \frac{k\, \pi}{n}-t}$, $k \in \mathbb{Z}$. In particular, $\Omega_{-t}=\Omega_t$ for all $t \in \mathbb{R}$. As a result, $\la_1(-t)=\la_1(t)\;\forall\;t\in \R$. Thus, $t\lmap\la_1(t)$ is an even function.
    \item \label{Critical_points} (Critical Points) By similar arguments as in the proof of \cite[Proposition 6.1]{Anisa2020} and using (i),(ii) given above, it follows that $\frac{k\pi}{n}$, $k=0,1,2,\dots,2n-1$, are the critical points of the map $t\lmap\la_1(t)$. In other words, $\la_1'(\frac{k\pi}{n})=0 \;\forall\;k=0,1,2,\dots,2n-1$.
\end{enumerate}
By the first two properties, it is enough to study the behaviour of the map $t\lmap\la_1(t)$ on the interval $[0,\frac{\pi}{n})$.

\subsection{Auxiliary results:} We recall the following strong maximum principle; cf. \cite[Theorem 7]{Protter}.
\begin{proposition}[strong maximum principle]\label{Strong_maximum}
Let $\Om\subset\mathbb{R}^N,\;N\geq 2$ be a domain and $u\in C^1(\overline{\Om})$ satisfies 
$$L(u):= \displaystyle\sum_{i,j=1}^{n}\frac{\pa}{\pa x_i}\bigg(a_{ij}\frac{\pa u}{\pa x_j}\bigg)\geq 0\;\;\text{in}\;\Om,$$
where $L$ is a uniformly elliptic operator on $\Om$ and $a_{ij}\in W_{\rm loc}^{1,\infty}(\Om)$. Suppose that $u\leq M$ in $\Om$ and  $u(x_0)=M$, where $x_0\in \pa \Om$ such that the interior sphere condition is satisfied at $x_0$. Then
$$\frac{\pa u}{\pa \eta}(x_0)>0,\;\;\text{unless}\; u\equiv M\;\text{in}\;\Om,$$
where $\eta$ denotes the unit outward normal to $\pa \Om$.
\end{proposition}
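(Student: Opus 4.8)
The statement is the Hopf boundary-point lemma together with the strong maximum principle for a uniformly elliptic operator in divergence form. Since $u$ is only assumed to lie in $C^1(\overline{\Om})$, the inequality $L(u)\geq 0$ must be read weakly, i.e. $-\int_{\Om}\sum_{i,j}a_{ij}\,\pa_j u\,\pa_i\varphi\,\dx\geq 0$ for every nonnegative $\varphi\in C_c^\infty(\Om)$, and all the comparisons below are to be carried out in this weak formulation. The plan is to dispose of the trivial alternative, use the interior sphere condition to localize near $x_0$, and then run a barrier/comparison argument. If $u\equiv M$ the conclusion holds through the ``unless'' clause, so from now on I would assume $u\not\equiv M$.

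The first substantial step is the interior strong maximum principle: $u<M$ throughout $\Om$. I would obtain this by applying the weak Harnack inequality of De Giorgi--Nash--Moser theory (available because $L$ is uniformly elliptic with $L^\infty$ coefficients, the divergence structure absorbing $\pa_i a_{ij}\in L^\infty_{\mathrm{loc}}$) to the nonnegative weak supersolution $v:=M-u$, which satisfies $L(v)\leq 0$. The weak Harnack estimate $\big(\tfrac{1}{|B_{2r}|}\int_{B_{2r}}v^{q}\big)^{1/q}\leq C\inf_{B_r}v$ shows that if $v$ vanished at one interior point then $v\equiv 0$ on a ball, and a covering/connectedness argument over the domain $\Om$ would force $v\equiv 0$, contradicting $u\not\equiv M$. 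Hence $u<M$ in $\Om$; in particular $u\leq M-\de$ on any compact sphere inside $\Om$, for some $\de>0$.

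Next I would use the interior sphere condition to fix an open ball $B_R(y)\subset\Om$ with $x_0\in\pa B_R(y)$, and build a barrier on the annulus $A:=B_R(y)\setminus\overline{B_{R/2}(y)}$. Set $w(x)=e^{-\al|x-y|^2}-e^{-\al R^2}$, which is smooth, positive in $B_R(y)$, and zero on $\pa B_R(y)$. Expanding $L(w)$ and using uniform ellipticity, $\sum_{i,j}a_{ij}\xi_i\xi_j\geq\theta|\xi|^2$, together with the bounds $|a_{ij}|,|\pa_i a_{ij}|\leq\La$ coming from $a_{ij}\in W^{1,\infty}_{\mathrm{loc}}$, one gets $L(w)\geq e^{-\al|x-y|^2}\big(4\al^2\theta|x-y|^2-C\al\big)$ on $A$ for a constant $C=C(\La,R)$; since $|x-y|\geq R/2$ there, taking $\al$ large forces $L(w)\geq 0$ a.e. in $A$, so $w$ is a weak subsolution. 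Now consider $z:=u-M+\ep w$. Because $M$ is constant, $L(z)=L(u)+\ep L(w)\geq 0$ weakly. On $\pa B_R(y)$ we have $w=0$ and $u\leq M$, so $z\leq 0$; on $\pa B_{R/2}(y)$ the bound $u\leq M-\de$ gives $z\leq-\de+\ep\|w\|_\infty\leq 0$ once $\ep$ is small. Thus $z\leq 0$ on $\pa A$, and testing the weak subsolution inequality for $z$ with $\varphi=z^+\in H^1_0(A)$ and invoking ellipticity yields $\int_{\{z>0\}}|\Gr z|^2=0$, hence $z\leq 0$ throughout $A$.

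Finally, at $x_0$ one has $z(x_0)=u(x_0)-M+\ep w(x_0)=0$, so $z$ attains its maximum over $\overline{A}$ at the boundary point $x_0$. Moving inward along $-\eta$ and letting the step tend to zero gives $\tfrac{\pa z}{\pa\eta}(x_0)\geq 0$, i.e. $\tfrac{\pa u}{\pa\eta}(x_0)\geq-\ep\,\tfrac{\pa w}{\pa\eta}(x_0)$. Since $\eta$ is the outward radial direction of $B_R(y)$ at $x_0$, a direct computation gives $\tfrac{\pa w}{\pa\eta}(x_0)=-2\al R\,e^{-\al R^2}<0$, so $\tfrac{\pa u}{\pa\eta}(x_0)\geq 2\ep\al R\,e^{-\al R^2}>0$, as claimed. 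The main obstacle throughout is the low regularity: because $u$ is only $C^1$ and the coefficients only Lipschitz, every maximum-principle step must be justified variationally rather than pointwise, and the strong maximum principle genuinely requires the weak Harnack inequality. The divergence structure is precisely what keeps the coefficient-derivative terms harmless (they sit in $L^\infty_{\mathrm{loc}}$) and lets the ellipticity constant $\theta$ dominate once $\al$ is chosen large.
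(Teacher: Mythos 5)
The paper itself gives no proof of this proposition: it is recalled verbatim from the literature (cf.\ [Protter--Weinberger, Theorem 7]) and used as a black box. So there is nothing to match step by step; what you have written is precisely the classical argument underlying that citation, correctly transposed to the divergence-form, low-regularity setting: the interior strong maximum principle (which you obtain from the weak Harnack inequality applied to $M-u$, the right tool when $u$ is only $C^1$ and the inequality $L(u)\geq 0$ is weak), the Hopf exponential barrier $w(x)=e^{-\al|x-y|^2}-e^{-\al R^2}$ on a tangent annulus, the comparison $z=u-M+\ep w\leq 0$ obtained by testing with $z^+\in H^1_0(A)$ rather than by a pointwise maximum principle, and the final one-sided difference quotient at $x_0$, which is legitimate because $u\in C^1(\overline{\Om})$ makes $z$ differentiable there. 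The overall structure is sound and, at this regularity, each variational substitution you make for the classical pointwise steps is the correct one.

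One step is over-claimed, and you should flag it because it touches an imprecision in the proposition's own hypotheses. You invoke uniform bounds $|a_{ij}|,|\pa_i a_{ij}|\leq \La$ on the annulus $A=B_R(y)\setminus\overline{B_{R/2}(y)}$ ``coming from $a_{ij}\in W^{1,\infty}_{\rm loc}(\Om)$''. But $W^{1,\infty}_{\rm loc}(\Om)$ yields such bounds only on compact subsets of $\Om$, whereas $\overline{A}$ contains the contact point $x_0\in\pa\Om$; shrinking the interior sphere does not help, since any ball tangent at $x_0$ still has $x_0$ in its closure. As written, the constant $C$ in $L(w)\geq e^{-\al|x-y|^2}\left(4\al^2\theta|x-y|^2-C\al\right)$ need not be finite on $A$, and then no single choice of $\al$ makes $w$ a subsolution. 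Some control of the coefficients at $x_0$ is genuinely needed, not a technicality: for divergence-form operators with merely bounded measurable coefficients near the boundary point, the Hopf lemma is known to fail. The repair is to read (and state) the hypothesis as giving uniform ellipticity together with $W^{1,\infty}$ bounds on a full neighbourhood of the relevant boundary portion, up to $x_0$ --- which is exactly the situation in which the paper applies the proposition: there the coefficients $a_{ij}$ are built from $\nabla u_t$ and $\nabla v_t$, whose gradients do not vanish near $\pa P_t$, and the paper explicitly produces a neighbourhood $V$ of $\pa P_t\cap\pa H_{k_0}(t)$ on which the linearized operator is uniformly elliptic with the needed coefficient bounds. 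With that reading made explicit, your proof is complete.
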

Next, we state a strong comparison principle for $p$-Laplacian in a weaker form, which is proved for a more general setting in \cite[Theorem 1.4]{Sciunzi2014}.
\begin{proposition}[Strong comparison principle]\label{Strong_comparison}
Let $\Om\subset\mathbb{R}^N,\;N\geq 2$ be a domain and $u,v\in C^1(\overline{\Om})$ satisfying 
$$-\Delta_p u-g(u)\leq -\Delta_p v-g(v)\;\;\text{in}\;\Om,$$
with $0\in\Om$ and $g$ be a locally Lipschitz function such that $g(x)>0$ for $x>0$. Assume that either $u$ or $v$ is non-negative solution of $-\Delta_p w=g(w)$ with $\frac{2N+2}{N+2}<p<\infty$. Then if $u\leq v$ in a connected subdomain $\Om'\subset \Om$, it follows that
$$u<v \;\;\text{in}\;\Om',\;\;\text{unless}\; u\equiv v \;\text{in} \;\Om'.$$
\end{proposition}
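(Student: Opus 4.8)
The plan is to reduce the assumed nonlinear differential inequality to a \emph{linear} one for the difference $w \dq v-u$, and then run a strong maximum principle; the only genuine difficulty is that the $p$-Laplacian is not uniformly elliptic at critical points, and overcoming this is precisely where the restriction $\frac{2N+2}{N+2}<p<\infty$ is used.

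First I would set $w = v-u$, so that by hypothesis $w\geq 0$ in $\Om'$, and rewrite the inequality $-\Delta_p u-g(u)\leq -\Delta_p v-g(v)$ as $\Delta_p v-\Delta_p u\leq g(u)-g(v)$. Using the fundamental-theorem-of-calculus linearization
$$
|\nabla v|^{p-2}\nabla v-|\nabla u|^{p-2}\nabla u = A(x)\,\nabla w,\qquad A(x)=\int_0^1\Big(|\xi_t|^{p-2}I+(p-2)|\xi_t|^{p-4}\,\xi_t\otimes\xi_t\Big)\dt,
$$
with $\xi_t=\nabla u+t\,\nabla w$, produces a symmetric matrix field $A$ whose eigenvalues are comparable to $|\xi_t|^{p-2}$. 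Since $g$ is locally Lipschitz, $g(v)-g(u)=c(x)\,w$ for a bounded function $c$. Hence $\mathrm{div}(A\nabla w)\leq -c(x)w$, i.e.\ $w$ is a nonnegative supersolution of the linear operator $Lw\dq -\mathrm{div}(A\nabla w)-c(x)w\geq 0$ in $\Om'$, the bounded zeroth-order term being absorbed in the standard way.

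The obstruction is that $A$ degenerates (for $p>2$) or becomes singular (for $p<2$) on the critical set $Z\dq\{\nabla u=0\}\cup\{\nabla v=0\}$, so $L$ is not uniformly elliptic and Proposition~\ref{Strong_maximum} cannot be applied across $Z$. Away from $Z$, however, $A$ is uniformly elliptic with locally bounded coefficients on compact subsets, so on each connected component of $\Om'\setminus Z$ the classical strong maximum principle forces $w$ to be either strictly positive or identically zero. The heart of the matter is to propagate this dichotomy across $Z$.

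This is exactly where $\frac{2N+2}{N+2}<p<\infty$ enters (for $N=2$ this reads $p>\tfrac32$, consistent with Theorem~\ref{theorem}). Because $u$ (or $v$) solves $-\Delta_p w=g(w)$ with $g>0$ on positives, the source is strictly positive where the solution is positive, and the Damascelli--Sciunzi regularity theory then gives that $Z$ has zero Lebesgue measure together with a summability estimate $|\nabla u|^{-1}\in L^{r}_{\mathrm{loc}}$ for an exponent $r$ governed precisely by the lower bound on $p$. This summability makes $\rho=|\xi_t|^{p-2}$ a genuine Muckenhoupt-type weight, so the linearized equation can be posed in the weighted Sobolev space $H^{1,2}_\rho$, where one proves a weighted Harnack inequality and hence a strong maximum principle insensitive to $Z$. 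I would then finish by a connectedness argument: both $\{w=0\}$ and $\{w>0\}$ are relatively open in $\Om'$ by the weighted strong maximum principle, and since $\Om'$ is connected one of them is empty, yielding $w\equiv 0$ or $w>0$ throughout. I expect the gradient summability and the attendant weighted Harnack inequality to be the main obstacle; the rest is the routine linearization-plus-maximum-principle scheme.
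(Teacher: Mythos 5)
The paper offers no proof of this proposition at all: it is stated as a quoted result, in weakened form, from \cite[Theorem 1.4]{Sciunzi2014}, so the only fair comparison is against that cited proof --- and your outline reconstructs its architecture faithfully. The linearization $A(x)=\int_0^1\left(|\xi_t|^{p-2}I+(p-2)|\xi_t|^{p-4}\,\xi_t\otimes\xi_t\right){\rm d}t$ with $\xi_t=\nabla u+t\nabla w$, the identification of the critical set $Z=\{\nabla u=0\}\cup\{\nabla v=0\}$ as the sole obstruction, the Damascelli--Sciunzi summability $|\nabla u|^{-1}\in L^r_{\rm loc}$ (available precisely because $g>0$ on positive values makes the right-hand side strictly positive where the solution is), the weighted space built on $\rho=|\nabla u|^{p-2}$, the weighted Harnack-type comparison inequality, and the final open-and-closed dichotomy on the connected set $\Om'$ are exactly the ingredients of Sciunzi's argument; the threshold $p>\frac{2N+2}{N+2}$ does indeed enter through the integrability exponent needed for the weighted Sobolev--Poincar\'e inequality, and your sanity check that this reads $p>\frac{3}{2}$ for $N=2$ matches the paper's hypotheses elsewhere.

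Two refinements, neither fatal. First, $\rho$ is not shown to be a Muckenhoupt weight in this theory; Damascelli--Sciunzi derive the weighted Poincar\'e--Sobolev inequalities directly from the $L^r_{\rm loc}$-integrability of $\rho^{-1}$, which is weaker than $A_2$-type information, so you should phrase that step accordingly. Second, your sketch, as you concede, black-boxes the two pillars (the gradient summability estimate and the weighted Harnack inequality), which are substantial theorems in their own right and constitute essentially the whole content of the cited result; since the paper itself invokes the proposition purely by citation, this level of resolution is defensible, but be aware that what you have written is a correct roadmap to Sciunzi's proof rather than a self-contained argument. One last small point: writing $g(v)-g(u)=c(x)w$ with $c$ bounded uses the local Lipschitz property of $g$ together with the boundedness of $u,v$, which holds here because $u,v\in C^1(\overline{\Om})$ on the bounded domains where the proposition is actually applied.
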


The following version of the weak comparison principle follows from \cite[Theorem 2.1]{Anisa2015Toledo}.
\begin{proposition}[Weak comparison principle]\label{Weak_comparison}
Let $\Om\subset\mathbb{R}^N,\;N\geq 2$ be a Lipschitz domain. Suppose that $1<p<\infty$ and $u,v\in C^1(\overline{\Om})$ be two non-negative weak solutions of $-\Delta_p w=\la w^{p-1}$ on $\Om$. Then if $u\leq v$ on $\pa \Om$,
$$u\leq v\;\text{on}\; \Om.$$
Furthermore if $x_0\in \pa \Om$ be such that $u(x_0)=0=v(x_0)$, then $\frac{\pa v}{\pa \eta}(x_0)\leq \frac{\pa u}{\pa \eta}(x_0)$.
\end{proposition}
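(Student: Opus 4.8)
The plan is to establish the order relation $u\le v$ on $\Om$ first, and then to obtain the normal-derivative inequality as an immediate consequence. Indeed, once $u\le v$ is known, put $w:=v-u\in C^1(\overline{\Om})$; then $w\ge 0$ on $\Om$ while $w(x_0)=v(x_0)-u(x_0)=0$, so $x_0$ is a minimum of $w$ over $\overline{\Om}$. Since $w$ cannot decrease as one moves from $x_0$ into $\Om$, its outward normal derivative satisfies $\frac{\pa w}{\pa\eta}(x_0)=\nabla w(x_0)\cdot\eta\le 0$, which is precisely $\frac{\pa v}{\pa\eta}(x_0)\le\frac{\pa u}{\pa\eta}(x_0)$. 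Thus the whole content of the statement lies in the comparison $u\le v$, and I now describe how I would prove that.

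By the strong maximum principle for positive solutions of the $p$-Laplace equation, both $u$ and $v$ are strictly positive in the interior of $\Om$. Arguing by contradiction, suppose the open set $E:=\{x\in\Om:u(x)>v(x)\}$ has positive measure. On $\pa E$ we have $u=v$: at interior points this is by continuity, while at points of $\pa E\cap\pa\Om$ it follows from $u\ge v$ (limit from $E$) together with the hypothesis $u\le v$ on $\pa\Om$. Consequently the Díaz--Saá test functions
\[
\phi_1:=\frac{u^p-v^p}{u^{p-1}},\qquad \phi_2:=\frac{u^p-v^p}{v^{p-1}}
\]
vanish on $\pa E$ and hence belong to $W^{1,p}_0(E)$. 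Inserting $\phi_1$ into the weak equation for $u$ and $\phi_2$ into that for $v$, and using $-\Delta_p u=\la u^{p-1}$, $-\Delta_p v=\la v^{p-1}$, gives
\[
\int_E|\nabla u|^{p-2}\nabla u\cdot\nabla\phi_1\dx=\la\int_E(u^p-v^p)\dx=\int_E|\nabla v|^{p-2}\nabla v\cdot\nabla\phi_2\dx .
\]
Subtracting the two ends yields
\[
\int_E\Big(|\nabla u|^{p-2}\nabla u\cdot\nabla\phi_1-|\nabla v|^{p-2}\nabla v\cdot\nabla\phi_2\Big)\dx=0 .
\]
By the Díaz--Saá inequality the left-hand side is nonnegative, and it vanishes only if $u/v$ is constant on each connected component of $E$.

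It remains to exclude the equality case. On a component $E_i$ we would have $u=k_i v$ for some constant $k_i>1$. If some part of $\pa E_i$ lies in the interior of $\Om$, there $u=v\ne 0$ forces $k_i=1$, a contradiction; the only surviving possibility is $\pa E_i\subset\pa\Om$ with $u=v=0$ on it. In that situation $u$ and $v$ are positive solutions of $-\Delta_p w=\la w^{p-1}$ in $E_i$ vanishing on $\pa E_i$, so $\la$ is a Dirichlet eigenvalue of $E_i$ and hence $\la\ge\la_1(E_i)$. This is ruled out by the subcriticality $\la<\la_1(\Om)\le\la_1(E_i)$ (the last inequality by domain monotonicity of the first eigenvalue), which is the extra ingredient carried by \cite[Theorem 2.1]{Anisa2015Toledo} and is automatically present everywhere this proposition is applied, since there $\Om$ is a proper subdomain of the ambient domain $\Om_t$ and $\la=\la_1(\Om_t)<\la_1(\Om)$. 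Therefore $E$ has measure zero and $u\le v$ on $\Om$.

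The decisive obstacle is exactly this last point: the reaction term $\la w^{p-1}$ is increasing in $w$, so the operator $w\mapsto-\Delta_p w-\la w^{p-1}$ is not monotone and the comparison fails at the threshold $\la=\la_1(\Om)$ (take $u=c_1\psi$, $v=c_2\psi$ with $c_1>c_2$ and $\psi$ the first eigenfunction). The Díaz--Saá telescoping isolates this single degenerate scenario, and the spectral gap $\la<\la_1(\Om)$ is what removes it. The only routine technicalities are the admissibility of $\phi_1,\phi_2$ near $\pa\Om$, where $u,v\to 0$ (handled by replacing $u^{p-1},v^{p-1}$ with $(u+\eps)^{p-1},(v+\eps)^{p-1}$ and letting $\eps\to 0$), and the elementary boundary-derivative step recorded at the outset.
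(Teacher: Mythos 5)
Your proposal is correct in substance, but note first what it is being compared against: the paper contains no internal proof of Proposition \ref{Weak_comparison} at all --- it is quoted as a consequence of \cite[Theorem 2.1]{Anisa2015Toledo}. So your D\'iaz--Sa\'a argument is a genuinely different, self-contained route, in the spirit of the Picone-type comparisons used in \cite{Anoop2018} rather than of the cited source. The most valuable thing you did is to isolate the spectral-gap hypothesis: as your example $u=c_1\psi_1$, $v=c_2\psi_1$ with $\la=\la_1(\Om)$ and $c_1>c_2$ shows, the proposition as printed (with no condition relating $\la$ to $\la_1(\Om)$) is literally false, so \emph{any} proof must import a hypothesis such as $\la<\la_1(\Om)$; this is indeed the content carried by the cited theorem, and you correctly verify that it is restored in every application in the paper, where the comparison is always run on a proper subdomain ($O_\al^+(t)$ or $H_{k_0}(t)$) of $\Om_t$ with $\la=\la_1(\Om_t)<\la_1$ of that subdomain --- the strict domain monotonicity there follows from positivity and simplicity of the first eigenfunction, since $\Om_t$ is connected and the complement of the subdomain has nonempty interior. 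Your treatment of the ``furthermore'' clause is also a simplification worth noting: once $u\le v$ in $\Om$ is known, the inequality $\frac{\pa v}{\pa \eta}(x_0)\le\frac{\pa u}{\pa \eta}(x_0)$ is pure $C^1$ calculus at a minimum point of $v-u$, with no PDE input; the paper, by contrast, obtains its strict boundary-derivative comparisons (for $\frac{3}{2}<p<\infty$) by linearizing and invoking the strong maximum principle, Proposition \ref{Strong_maximum}, inside the proof of Proposition \ref{Hopf_lemma}.

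Three small points should be recorded to make the argument airtight, all routine. First, your dichotomy in the equality case should also dispose of points of $\pa E_i\cap\pa\Om$ where $v>0$ (they force $k_i=1$ exactly as interior points do), and the trivial cases $u\equiv 0$ or $v\equiv 0$ should be absorbed explicitly: if $v\equiv 0$ then $u=0$ on $\pa\Om$, making $u$ a Dirichlet eigenfunction and contradicting $\la<\la_1(\Om)$, which is precisely your degenerate scenario. Second, the admissibility of $\phi_1,\phi_2$ does need the $(v+\eps)$-regularization you mention near the parts of $\pa E\cap\pa\Om$ where $v$ vanishes, together with the fact that a nonnegative $W^{1,p}\cap C(\overline{E})$ function vanishing on $\pa E$ lies in $W^{1,p}_0(E)$ for an \emph{arbitrary} bounded open set $E$ (no regularity of $E$ is available, since $E=\{u>v\}$). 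Third, at a merely Lipschitz boundary point the outward normal need not exist, so the final clause should be read at points where $\eta(x_0)$ is defined; this is harmless, as every invocation in the paper takes place on the $C^2$ pieces $\pa P_t$ and $\pa B$, and the limit of interior directions argument you sketch then goes through. With these caveats made explicit, your proof is complete and arguably more informative than the citation it replaces.
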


\section{main results}\label{Mainresult}
\subsection{Sector reflections for $n$ odd}
In this section, we introduce a modified sector reflection technique for $P$ having dihedral symmetry when $n$ is odd by pairing sectors across the upper and lower hemispheres. To be precise, we have the following lemma.

\begin{figure}[t]\centering
\subfloat[
]{
\label{reflection_1}
\includegraphics[width=0.25\textwidth]{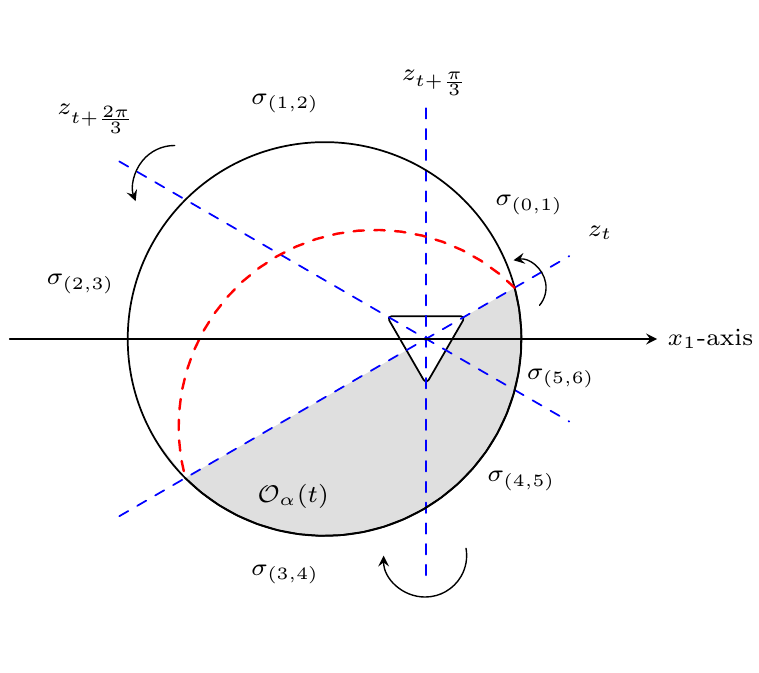}}
\hspace{20mm}
\subfloat[
]{
\label{reflection_2}
\includegraphics[width=0.25\textwidth]{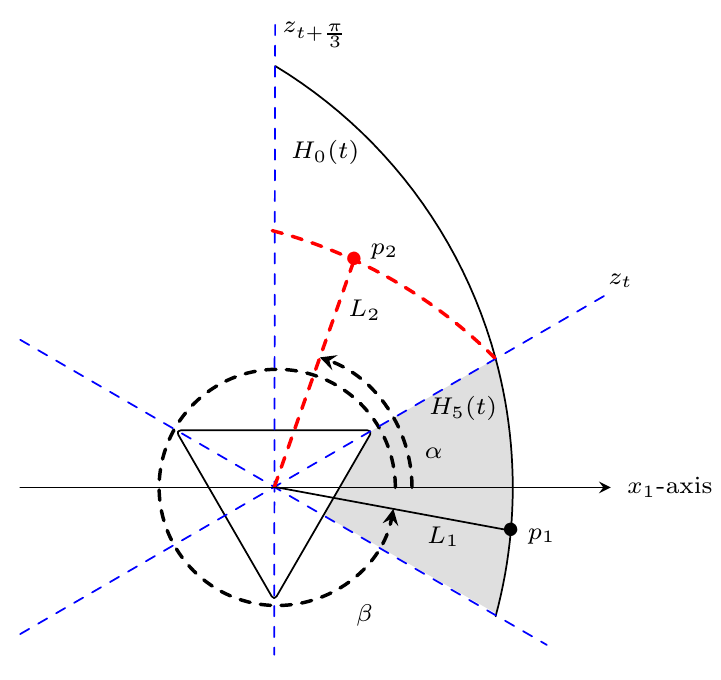}}
\caption{ (A) Sector pairings for $n$ odd. (B) Containment of sectors which lie on either side of the $x_1$-axis as in Lemma \ref{Sector_reflection_odd}. }\label{fig:Reflection}
\end{figure}

\begin{lemma}\label{Sector_reflection_odd}
Assume that $P$ satisfies (\nameref{itm:A1}) for some $n\in \N$, where $n \geq2$ is odd. Let $\Om_t$, for $t\in[0,2\pi)$, be as in \eqref{t_configuration} and $H_k(t),\widetilde{H}_k(t)$ be as defined in \eqref{Sectors}. Also let $R_\al$ denote the reflection about the axis $z_\al$. Then the following holds:
\begin{enumerate}[(i)]
    \item For  $k =2n-1,1,3,\dots,n-2$, $$R_{t+\frac{(k+1)\pi}{n}}(H_k(t))\subsetneq H_{k+1}(t) \;\text{and}\; R_{t+\frac{(k+1)\pi}{n}}(\widetilde{H}_k(t)) \subsetneq \widetilde{H}_{k+1}(t) \setminus \partial B.$$
    \item \label{Containment_2_odd} For $k=n, n+2,\dots, 2n-3$, $$R_{t+\frac{(k+1)\pi}{n}}(H_{k+1}(t)) \subsetneq H_{k}(t)\;\text{and}\; R_{t+\frac{(k+1)\pi}{n}}(\wide{H}_{k+1}(t)) \subsetneq \wide{H}_{k}(t) \setminus \pa B.$$
    \end{enumerate}
\end{lemma}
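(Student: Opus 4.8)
The plan is to reduce each inclusion to a statement about the disk $B$ alone, using that the reflection axis is an axis of symmetry of $P_t$, and then to settle the disk statement with the boundary monotonicity of Lemma \ref{disc_monotonicity} together with the symmetry of $B$ about the $x_1$-axis. Throughout set $\beta := t+\frac{(k+1)\pi}{n}$. The first observation is that $z_\beta$ is a half-axis of symmetry of $P_t=\rho_t(P)$, so $R_\beta(\overline{P_t})=\overline{P_t}$; moreover $R_\beta$ fixes $o$ and sends the ray at angle $\phi$ to the ray at angle $2\beta-\phi$, hence maps $\sigma_{(k,k+1)}$ onto $\sigma_{(k+1,k+2)}$. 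Writing $\Om_t=B\setminus\overline{P_t}$ and using \eqref{Sectors}, this gives
\[
R_\beta\big(H_k(t)\big)=\big(R_\beta(B)\cap\sigma_{(k+1,k+2)}\big)\setminus\overline{P_t},
\]
so the inclusion $R_\beta(H_k(t))\subsetneq H_{k+1}(t)$ of part (i) is equivalent to $R_\beta(B)\cap\sigma_{(k+1,k+2)}\subsetneq B$, and the analogous reduction turns part (ii) into $R_\beta(B)\cap\sigma_{(k,k+1)}\subsetneq B$. This isolates the only nontrivial point: comparing the reflected disk with $B$ inside a single sector.

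For the disk comparison I would pass to the polar description $\partial B=\{g(\phi)e^{\mathbf{i}\phi}\}$ from \eqref{disc_parametrization}. Since $R_\beta$ preserves the distance to $o$ and sends angle $\phi$ to $2\beta-\phi$, the reflected boundary point at angle $\beta+\theta$ sits at radius $g(\beta-\theta)$; hence $R_\beta(B)\cap\sigma_{(k+1,k+2)}\subseteq B$ is equivalent to the radial inequality
\[
g(\beta-\theta)\le g(\beta+\theta),\qquad \theta\in\left(0,\tfrac{\pi}{n}\right),
\]
and the inequality for part (ii) is $g(\beta+\theta)\le g(\beta-\theta)$. A strict inequality forces the strict inclusion and pushes the reflected arc strictly into the interior of $B$, which is exactly what yields the ``$\setminus\partial B$'' in the closed-sector version. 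Equivalently, viewing $R_\beta(B)$ as the congruent disk centered at $R_\beta(-c,0)$, the line $z_\beta$ is the perpendicular bisector of the two centers, so $\partial B$ and $\partial R_\beta(B)$ meet precisely on $z_\beta$ and $R_\beta(B)$ lies inside $B$ exactly on the half-plane bounded by $z_\beta$ that contains the center $(-c,0)$; the radial inequalities above are the analytic form of this.

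It then remains to prove the radial inequalities for the stated index ranges. Working in the fundamental interval $t\in(0,\tfrac{\pi}{n})$ relevant to the monotonicity analysis (Section \ref{Properties_lambda}) and using the $x_1$-axis symmetry $g(\phi)=g(2\pi-\phi)$, the axes in part (i) satisfy $\beta\in(0,\pi)$ while those in part (ii) satisfy $\beta\in(\pi,2\pi)$. When the interval $(\beta-\theta,\beta+\theta)$ stays inside one monotonicity branch $[0,\pi]$ or $[\pi,2\pi]$, the inequalities are immediate from Lemma \ref{disc_monotonicity}. The cases needing the reflection symmetry are the \emph{mixed pair} $k=2n-1$ (where $\beta=t$ and the interval straddles the angle $0$, so that $H_{2n-1}$ is paired with $H_0$ across the $x_1$-axis) and the extreme within-hemisphere pairs whose interval straddles the far direction $\phi=\pi$. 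There I would fold the larger argument back into $[0,\pi]$ using $g(\phi)=g(2\pi-\phi)$, equivalently $g(\pi+s)=g(\pi-s)$, and then conclude by strict monotonicity of $g$, the strict gap being controlled by $\tfrac{\pi}{n}-t>0$.

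The main obstacle is precisely this last step for the mixed pair and the $\pi$-crossing pairs: monotonicity on a single branch does not apply, and one must combine Lemma \ref{disc_monotonicity} with the axial symmetry of $B$ to obtain a \emph{strict} inequality uniformly on $t\in(0,\tfrac{\pi}{n})$; at the symmetric endpoints $t\equiv 0\ (\mathrm{mod}\ \tfrac{\pi}{n})$ the reflections degenerate to equalities. This is exactly the difficulty that prevented the even-case argument of Lemma \ref{Sector_reflection_even} from extending to odd $n$, and the mixed pairing $H_{2n-1}\leftrightarrow H_0$ across the $x_1$-axis is what repairs the otherwise incomplete pairing of the odd number of sectors in each hemisphere.
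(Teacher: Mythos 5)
Your proposal is correct and takes essentially the same route as the paper: after reducing, via the $\mathbb{D}_n$-symmetry of $P_t$ about $z_\beta$, to a comparison of $B$ with its reflected copy inside a single sector, you dispose of the within-hemisphere pairs by the single-branch monotonicity of $g$ (Lemma \ref{disc_monotonicity}) and treat the two straddling pairs --- $k=2n-1$ across $\phi=0$ and $k=n-2$ across $\phi=\pi$ --- by folding with the $x_1$-axis symmetry $g(\phi)=g(2\pi-\phi)$, which is precisely the paper's angle computation with the points $p_1,p_2$ and the relations $\beta=2t+2\pi-\theta$ and $2\pi-\theta'=2(\frac{\pi}{n}-t)+\beta'$; your explicit restriction to $t\in(0,\frac{\pi}{n})$, with degeneration to equality at $t\equiv 0 \pmod{\frac{\pi}{n}}$, matches the range the paper's proof uses implicitly through ``$t>0$'' and ``$t\in(0,\frac{\pi}{n})$''. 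Your perpendicular-bisector aside (that $z_\beta$ bisects the segment joining $(-c,0)$ and $R_\beta(-c,0)$, so $R_\beta(B)$ lies strictly inside $B$ on the half-plane containing $(-c,0)$, which contains the relevant open sector for every $k$ in both index ranges since $\beta\in(0,\pi)$ in case (i) and $\beta\in(\pi,2\pi)$ in case (ii)) is actually a uniform argument that would cover all pairs at once, including the strictness and the ``$\setminus\partial B$'' assertion, and is cleaner than the case analysis the paper carries out --- but since you present it only as an equivalent reformulation, the substance of your proof coincides with the paper's.
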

\begin{proof}
Let $n\in \mathbb{N}, n \geq 2$ be odd. Here we note that the sectors $\sigma_{(k, k+1)}$ and $\sigma_{( k+1, k+2)}$ both are completely above the $x_1$-axis for $k=1,3,\dots,n-4$ (Figure \ref{fig:Reflection}-(B)). Also the sectors $\sigma_{(k, k+1)}$ and $\sigma_{( k+1, k+2)}$ are both completely below the $x_1$-axis for $k=n,n+2,\dots,2n-3$. Thus, as in the proof of \cite[Lemma 6.1]{Anisa2020} (which was proved for $n$ even), we can prove $(i)$ and $(ii)$ for $k=1,3,\dots,n-4$  and $k=n,n+2,\dots,2n-3$. We omit the details here. Thus the inclusions only remain to be proved for $k =2n-1, n-2$. Let us consider the sectors $\sigma_{(2n-1, 2n)}$ and $\sigma_{( 0, 1)}$. We claim the following:
	\begin{equation}\label{claim_containment}
	    R_t (H_{2n-1}(t)) \subsetneq H_0(t) \;\text{ and }\;R_t(\widetilde{H}_{2n-1}(t)) \subsetneq \widetilde{H}_{0}(t) \setminus \partial B.
	\end{equation}
	Now it is enough to show that reflection of any point on $\partial H_{2n-1}(t) \cap \partial B$ lies completely inside the sector $H_{0}(t)$. 
	Let $p_1$ be a point on  $\partial H_{2n-1}(t) \cap \partial B$, and $L_1$ be the line joining $p_1$ to the center $o$ of $P$. Also, let $p_2$ be the reflection of the point $p_1$ about the $ z_t$-axis. Let $L_2$ be the line joining $p_2$ to $o$ (See Figure \ref{fig:Reflection}-(B)). Now, if $p_1$ lies above the $x_1$-axis, we are done by monotonicity of the map $g$ (Lemma \ref{disc_monotonicity}). Therefore, assume that $p_1$ is below the $x_1$-axis. Let $L_2$ make an angle $\theta$ and $L_1$ make an angle $\beta$ with the positive $x_1$-axis. Then $\theta=\alpha+t$, where $\alpha $ is the angle between $L_1$ and the $z_t$-axis.
	  We also note that 
	 $$\alpha=(2\pi-\beta)+t\implies\beta=2t+2\pi - \theta.$$
	  Since $t>0$, we have $2\pi-\theta < \beta $.
	 Moreover, $\beta\in [\pi,2\pi]$ and $2\pi - \theta \in [\pi,2\pi]$. For, if $2\pi-\theta < \pi$ then $\theta \geq \pi$, a contradiction. Therefore, as $g$ is strictly decreasing (Lemma \ref{disc_monotonicity}) on $[\pi,2\pi]$, we have $g(2\pi-\theta)> g(\beta)$. That is, $g(\theta)> g(\beta)$, as $g$ is $2\pi$ periodic. Therefore the length of $L_2$ is strictly bigger than the length of $L_1$, and hence $p_2$ must be inside of $H_{0}(t)$. Thus the  claim is proved.
	 
\noi	Now we consider the sectors $\sigma_{(n-2, n-1)}$ and $\sigma_{(n-1, n)}$ and proceed in the same way. Our claim is, 
	\begin{equation}
	    R_t (H_{n-2}(t)) \subsetneq H_{n-1}(t)\;  \text{and} \;R_t(\wide{H}_{n-2}(t)) \subsetneq \wide{H}_{n-1}(t) \setminus \partial B.
	\end{equation}
	
\noi	As before, let $p_1^\prime$ be a point in $\partial H_{n-2}(t) \cap \partial B$, and $p_2^\prime$ be its reflection about the $z_{t+\frac{n-1}{n}\pi}$-axis. Let $L_1^\prime$ and $L_2^\prime$ be the lines joining $o$ to $p_1^\prime$ and $p_2^\prime$, respectively. 
Assume that $p_2^\prime $ lies below the $x_1$-axis. Let $\beta^\prime$ (resp. $\theta^\prime$) be the angle made by $L_1^\prime$ (resp. $L_2^\prime$) with the positive $x_1$-axis.
	Then clearly, 
	\begin{align*}
	    \theta^\prime &= 2(t+ \frac{n-1}{n}\pi) - \beta^\prime.
	    \end{align*}
	    This implies that
	\begin{align*} 2\pi- \theta^\prime &= 2\pi- 2(t+\frac{n-1}{n}\pi)+\beta^\prime= 2(\frac{\pi}{n}-t)+\beta^\prime > \beta^\prime,\;\;\text{as} \;t\in (0,\frac{\pi}{n}).
	\end{align*}
	Also, $\beta^\prime \in [0,\pi]$ and $2\pi-\theta^\prime \in [0,\pi]$. Otherwise, $\theta^\prime < \pi$, which is not possible. Since $g$ is a strictly increasing function on $[0,\pi] $  (Lemma \ref{disc_monotonicity}), we have $g(2\pi-\theta^\prime)> g(\beta^\prime)$. Thus $g(\theta^\prime )> g(\beta^\prime)$ by the periodicity of $g$ and hence $p_2^\prime $ lies inside $ H_{n-1}(t)$. This finishes the proof.
\end{proof}

 \subsection{Monotonicity of the eigenfunctions} 
  Here we prove a few monotonicity properties of the first eigenfunctions of $\eqref{problem}$ on the sectors formed by the consecutive axes of symmetries of $P$. We also establish the monotonicity of the normal derivatives of the first eigenfunctions on these sectors. First, we introduce some notations. 
  
  Let $t\in [0,2\pi)$.
\begin{align}
    \text{For $n$ odd},\; H_{\rm o}(t)&=\displaystyle\Bigg(\bigcup_{\substack{2n-1\leq k\leq n-2\\k \;\rm{odd }}} H_k(t)\Bigg)\cup \Bigg(\bigcup_{\substack{n\leq k\leq 2n-3\\k \;\rm{odd}}} H_{k+1}(t)\Bigg)\label{Odd_sectors},\\
    \Ga_{\rm o}(t)&=\displaystyle\Bigg(\bigcup_{\substack{2n-1\leq k\leq n-2\\k \;\rm{odd }}} (\pa P_t\cap \sigma_{(k,k+1)})\Bigg)\cup \Bigg(\bigcup_{\substack{n\leq k\leq 2n-3\\k \;\rm{odd}}} (\pa P_t\cap \sigma_{(k+1,k+2)})\Bigg)\subset  \pa H_{\rm o}(t).\nonumber
    \end{align}
    \begin{align}
    \text{For $n$ even},\;H_{\rm e}(t)&=\displaystyle\Bigg(\bigcup_{\substack{0\leq k\leq n-2\\k \;\rm{even}}} H_k(t)\Bigg)\cup \Bigg(\bigcup_{\substack{n\leq k\leq 2n-2\\k \;\rm{even}}} H_{k+1}(t)\Bigg)\label{Even_sectors},\\
    \Ga_{\rm e}(t) &=\displaystyle\Bigg(\bigcup_{\substack{0\leq k\leq n-2\\k \;\rm{even }}} (\pa P_t\cap \sigma_{(k,k+1)})\Bigg)\cup \Bigg(\bigcup_{\substack{n\leq k\leq 2n-2\\k \;\rm{even}}} (\pa P_t\cap \sigma_{(k+1,k+2)})\Bigg)\subset  \pa H_{\rm e}(t).\nonumber
\end{align}
\begin{proposition}\label{Hopf_lemma} 
Let $n\in \mathbb{N},\;n\geq 2$ and $1<p<\infty$.  Consider \eqref{problem} defined on $\Omega_t$, $t \in [0, 2 \pi)$. Let $u_t$ be a positive eigenfunction associated to $\la_1(t)$. Let $H_{\rm o}(t),\;H_{\rm e}(t),\;\Ga_{\rm o}(t)$ and $\Ga_{\rm e}(t)$ be as defined in \eqref{Odd_sectors} and \eqref{Even_sectors}.
\begin{enumerate}[(i)]
    \item 
    For $n$ odd, let $v_t$ be defined in $H_{\rm o}(t)$ as follows 
    $$v_t(x):=u_t\big(R_{t+\frac{(k+1)\pi}{n}}(x)\big) \;\text{for}\;\begin{cases}
    x \in H_k(t),\;\;
    k\in \{2n-1,1,\dots,n-2\},\\
      x \in H_{k+1}(t),\;\;
      k\in \{n,n+2,\dots,2n-3\}.
    \end{cases}$$
    Then $u_t\leq v_t$ in $H_{\rm o}(t)$ and $\frac{\pa v_t}{\pa \eta_t}\leq\frac{\pa u_t}{\pa \eta_t}$ on $\Ga_{\rm o}(t)\subset  \pa H_{\rm o}(t)$. Furthermore, both these inequalities are strict for $\frac{3}{2}<p<\infty$.
    
    \item \label{Neumann_derivative_even} For $n$ even, let $v_t$ be defined in $H_{\rm e}(t)$ as follows 
    $$v_t(x):=u_t\big(R_{t+\frac{(k+1)\pi}{n}}(x)\big) \;\text{ for }\;\begin{cases}
    x \in H_k(t),\;\;
    k\in \{0,2,\dots,n-2\},\\
        x \in H_{k+1}(t),\;\;
        k\in \{n,n+2,\dots,2n-2\}.
    \end{cases}$$
    Then $u_t\leq v_t$ in $H_{\rm e}(t)$ and $\frac{\pa v_t}{\pa \eta_t}\leq \frac{\pa u_t}{\pa \eta_t}$ on $\Ga_{\rm e}(t)\subset  \pa H_{\rm e}(t)$. Furthermore, both these inequalities are strict for $\frac{3}{2}<p<\infty$.
\end{enumerate}
\end{proposition}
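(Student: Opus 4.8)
The plan is to prove both parts by one reflection-and-comparison scheme, using Lemma \ref{Sector_reflection_odd} for the odd case (i) and Lemma \ref{Sector_reflection_even} for the even case (ii); I describe (i), since (ii) is word-for-word the same with the even pairing. First I would fix $t\in(0,\frac{\pi}{n})$, to which the general case reduces by the periodicity $\Om_{t+\frac{2\pi}{n}}=\Om_t$ and evenness $\Om_{-t}=\Om_t$ of the configuration, and work on each source sector appearing in the union \eqref{Odd_sectors} separately. On such a sector $H_k(t)$ the function $v_t=u_t\circ R_{t+\frac{(k+1)\pi}{n}}$ is well defined because the strict containment $R_{t+\frac{(k+1)\pi}{n}}(H_k(t))\subsetneq\Om_t$ of Lemma \ref{Sector_reflection_odd} keeps the reflected points in $\Om_t$; moreover, since $R_{t+\frac{(k+1)\pi}{n}}$ is an isometry, $v_t$ is again a positive solution of $-\Delta_p w=\la_1(t)\,w^{p-1}$ on $H_k(t)$ with the \emph{same} eigenvalue. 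Thus $u_t$ and $v_t$ are two positive solutions of one equation on $H_k(t)$, and everything reduces to comparing them.

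Next I would verify $u_t\le v_t$ on $\pa H_k(t)$, which splits into four pieces. On the reflection axis $z_{t+\frac{(k+1)\pi}{n}}$ the reflection fixes points, so $v_t=u_t$ there. On the arc $\pa P_t\cap\pa H_k(t)$ one has $u_t=0$, and since $z_{t+\frac{(k+1)\pi}{n}}$ is an axis of symmetry of $P_t$ the reflection carries $\pa P_t$ into $\pa P_t$, whence $v_t=0=u_t$. On the arc $\pa B\cap\pa H_k(t)$ one has $u_t=0$, while the strict containment $R_{t+\frac{(k+1)\pi}{n}}(\widetilde H_k(t))\subsetneq\widetilde H_{k+1}(t)\setminus\pa B$ of Lemma \ref{Sector_reflection_odd} places the reflected point in the interior of $\Om_t$, so $v_t>0=u_t$. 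The only delicate piece is the remaining free radial edge, and this is exactly where the mixed upper/lower-hemisphere pairing of Lemma \ref{Sector_reflection_odd} is essential: the pairing is arranged so that, proceeding from the sector meeting the far side of $\pa B$ (near $z_{t+\pi}$, where Lemma \ref{disc_monotonicity} makes $\pa B$ farthest and $u_t$ largest) towards $e_1$, the reflected image of each free edge lands on a ray on which the inequality for $u_t$ is already known, so the comparison propagates along the chain of sectors. I expect this free-edge bookkeeping to be the main obstacle of the proof.

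Granting $u_t\le v_t$ on $\pa H_k(t)$, the weak comparison principle (Proposition \ref{Weak_comparison}) gives $u_t\le v_t$ throughout $H_k(t)$ for every $1<p<\infty$, and collecting the sectors yields $u_t\le v_t$ on $H_{\rm o}(t)$. For the derivative statement I use that $\Ga_{\rm o}(t)\subset\pa P_t$, so at each point of $\Ga_{\rm o}(t)$ both $u_t$ and $v_t$ vanish; the boundary-derivative clause of Proposition \ref{Weak_comparison} then gives $\frac{\pa v_t}{\pa\eta_t}\le\frac{\pa u_t}{\pa\eta_t}$ on $\Ga_{\rm o}(t)$. This settles the non-strict assertions for all $1<p<\infty$.

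Finally, for $\frac{3}{2}<p<\infty$ I would upgrade to strict inequalities. Since $N=2$ gives $\frac{2N+2}{N+2}=\frac{3}{2}$, the strong comparison principle (Proposition \ref{Strong_comparison}) applies to $u_t\le v_t$ on the connected sector $H_k(t)$; as $u_t\not\equiv v_t$ (they already differ on the arc $\pa B\cap\pa H_k(t)$), it forces $u_t<v_t$ in $H_k(t)$, hence in $H_{\rm o}(t)$. For the strict derivative inequality I set $w=v_t-u_t>0$ and linearise: subtracting the two equations and applying the mean value theorem to $\xi\mapsto|\xi|^{p-2}\xi$ writes $-\operatorname{div}(A(x)\Gr w)=\la_1(t)(v_t^{p-1}-u_t^{p-1})\ge 0$, where $A$ is symmetric and, by Sciunzi's $C^1$-regularity valid for $p>\frac{3}{2}$, uniformly elliptic with locally Lipschitz coefficients off the negligible critical set; since $\pa P_t$ is $C^2$ the interior sphere condition holds, so Hopf's lemma in the form of Proposition \ref{Strong_maximum} (applied to $-w$) gives $\frac{\pa w}{\pa\eta_t}<0$, i.e.\ $\frac{\pa v_t}{\pa\eta_t}<\frac{\pa u_t}{\pa\eta_t}$, on $\Ga_{\rm o}(t)$. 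The even case (ii) then follows verbatim with Lemma \ref{Sector_reflection_even}, $H_{\rm e}(t)$ and $\Ga_{\rm e}(t)$ replacing their odd counterparts.
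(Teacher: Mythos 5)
Your overall architecture (reflect, compare weakly, upgrade with the strong comparison principle using $\frac{2N+2}{N+2}=\frac{3}{2}$ at $N=2$, then linearise $w=v_t-u_t$ and apply Proposition \ref{Strong_maximum} near $\pa P_t$, where $\frac{\pa u_t}{\pa \eta_t}<0$ guarantees uniform ellipticity of the linearised operator) coincides with the paper's, and those steps are handled correctly. But there is a genuine gap at exactly the point you flag as ``the main obstacle'': the inequality $u_t\le v_t$ on the free radial edge of each sector. Your proposed chain propagation cannot work as described. Once the comparison on the pair $(H_{k-2}(t),H_{k-1}(t))$ is established, what it outputs (by continuity up to the closure) is the relation $u_t\big(re^{\textbf{i}(t+\frac{(k-2)\pi}{n})}\big)\le u_t\big(re^{\textbf{i}(t+\frac{k\pi}{n})}\big)$, a mirror relation across the pairing axis $z_{t+\frac{(k-1)\pi}{n}}$; the input you need on the free edge of $H_k(t)$ is the different relation $u_t\big(re^{\textbf{i}(t+\frac{k\pi}{n})}\big)\le u_t\big(re^{\textbf{i}(t+\frac{(k+2)\pi}{n})}\big)$, a mirror relation across the next axis $z_{t+\frac{(k+1)\pi}{n}}$, and the former does not imply the latter. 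Worse, the chain has no base case: for every source sector in \eqref{Odd_sectors} (e.g.\ $H_{n-2}(t)$, whose free edge at angle $t+\frac{(n-2)\pi}{n}$ reflects onto the ray at angle $t+\pi$) neither $u_t$ nor its reflection vanishes on the free edge, and ``$u_t$ is largest near $z_{t+\pi}$'' is not a priori information --- angular monotonicity of $u_t$ is precisely what the argument is trying to prove, so invoking it there is circular.

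The paper's device, which your proposal is missing, removes the free edges altogether before touching the sectors: for the pairing axis $\al=t+\frac{(k_0+1)\pi}{n}$ one first compares $u_t$ with $\tilde v:=u_t\circ R_\al$ on the whole half domain $O_\al^+(t)=S_\al\cap\Om_t$ of \eqref{Half_space}. Its boundary consists only of a piece of $z_\al$ (where $\tilde v=u_t$), a piece of $\pa P_t$ (where both vanish, since $z_\al$ is a symmetry axis of $P_t$), and a piece of $\pa B$ (where $u_t=0<\tilde v$, because $R_\al(O_\al^+(t))\subsetneq O_\al^-(t)$, which rests on Lemma \ref{disc_monotonicity}) --- no free radial edges occur. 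The weak and then the strong comparison principles give $\tilde v> u_t$ in all of $O_\al^+(t)$ for $\frac{3}{2}<p<\infty$ (and $\tilde v\ge u_t$ for all $p$), and since the free edge $z_{t+\frac{k_0\pi}{n}}\cap\Om_t$ of $H_{k_0}(t)$ lies inside $O_\al^+(t)$ with $v_t=\tilde v$ there, this supplies exactly the missing boundary inequality; only then does one run your sector comparison on $H_{k_0}(t)$. With this two-stage comparison inserted, the rest of your argument (including the case $1<p\le\frac{3}{2}$ via the derivative clause of Proposition \ref{Weak_comparison}, and the even case via Lemma \ref{Sector_reflection_even}) goes through as in the paper.
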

\begin{proof}
We give proof for $(i)$, i.e., when $n$ is odd. The $n$ even case, i.e., part $(ii)$ of the Proposition, can be proved in a similar manner.

$(i)$ Let $n\in \mathbb{N}$, $n\geq 2$, be odd. First we prove the strict inequality case. Assume that $\frac{3}{2}<p<\infty$. Let $t\in [0,2\pi)$ be fixed. For convenience we set $u:=u_t,\;v:=v_t$ and $\eta:=\eta_t$. Let $k_0\in\{2n-1,1,\dots,n-2\}$ be fixed. Also, let $\al=t+\frac{(k_0+1)\pi}{n}$. Consider $O_\al^+(t)$ as in \eqref{Half_space}. Observing that $R_{\al}(O_\al^+(t))\subsetneq O_\al^-(t)$ (Figure \ref{fig:Reflection}-(A)), we define $\tilde{v}$ in $O_\al^+(t)$ as follows:
$$\tilde{v}(x):=u\big(R_{\al}(x)\big)\;\text{for}\;x\in O_\al^+(t).$$
Let $u_0$ denote the restriction of the function $u$ to $\pa O_\al^+(t) \cap z_{\al}$. Then $u$ and $\tilde{v}$ satisfy the following equations weakly:
\begin{center}
\noindent\begin{minipage}{.3\linewidth}
\begin{equation*}
	 \begin{aligned}
	-\Delta_p u &=\la_1 u^{p-1}, \\
	u &=0, \\
	u &=u_0, \\
	u&=0,
	\end{aligned}
\end{equation*}
\end{minipage}
\begin{minipage}{.3\linewidth}
{\begin{equation*}
	 \begin{aligned}
	-\Delta_p \tilde{v} &=\la_1 \tilde{v}^{p-1} \;\;\; \text{in} \quad O_\al^+(t),\\
	\tilde{v} &=0 \qquad\quad\; \text{on}\; \pa O_\al^+(t) \cap\pa P_t,\\
	 \tilde{v} &=u_0 \qquad\;\;\; \text{on}\; \pa O_\al^+(t) \cap z_{\al},\\
	 \tilde{v} &>0 \qquad\quad\; \text{on}\; \pa O_\al^+(t) \cap\pa B.\\
	\end{aligned}
	\end{equation*}
}
\end{minipage}
\end{center}
 \noi Note that $\tilde{v}\geq u$ on $\pa O_\al^+(t)$. Since $u\in C^1(\overline{\Om_t})$ (cf. \cite[Theorem 1.3]{Barles1988}), by the weak comparison principle (Proposition \ref{Weak_comparison}), we obtain $\tilde{v}\geq u$ in $O_\al^+(t)$. Therefore the strong comparison principle (Proposition \ref{Strong_comparison}) yields 
\begin{equation}\label{Strict_comparison_1}
    \tilde{v}>u \;\text{in}\; O_\al^+(t).
\end{equation}

\noi Let us consider the sector $H_{k_0}(t)$ and let $v$ be defined in $H_{k_0}(t)$ as in $(i)$, i.e., $v(x):=u\big(R_{\al}(x)\big)$ for $x\in H_{k_0}(t) $.
Now by Lemma \ref{Sector_reflection_odd}, $v$ is well defined in $H_{k_0}(t)$. Also $v=\tilde{v}$ on $z_{t+\frac{k_0\pi}{n}}\cap O_\al^+(t)$. Therefore by \eqref{Strict_comparison_1}, we have $v>u$ on $z_{t+\frac{k_0\pi}{n}}\cap O_\al^+(t)$. Thus $u$ and $v$ satisfy the following equations weakly:
\begin{center}
\begin{minipage}{.3\linewidth}
\begin{equation*}
	 \begin{aligned}
	-\Delta_p u &=\la_1 u^{p-1}, \\
	u &=0, \\
	u &=u_0,\\
	u&=0,
	\end{aligned}
\end{equation*}
\end{minipage}
\begin{minipage}{.5\linewidth}
{\begin{equation*}
	 \begin{aligned}
	-\Delta_p v&=\la_1 v^{p-1} \quad\text{in} \quad H_{k_0}(t),\\
	v &=0 \qquad \quad\;\;\text{on}\; \pa H_{k_0}(t)  \cap\pa P_t,\\
	 v &>u_0 \qquad\;\;\;\;\text{on}\; \big(\pa H_{k_0}(t) \cap z_{t+\frac{k_0\pi}{n}}\big)\setminus\big(\pa P_t\cup\pa B\big),\\
	 v &>0 \qquad \quad\;\;\text{on}\; \pa H_{k_0}(t) \cap\pa B.
	\end{aligned}
\end{equation*}}
\end{minipage}
\end{center}

\noi As earlier, using the weak comparison principle and the strong comparison principle, we get 
\begin{equation}\label{Strict_comparison_2}
    v>u \;\text{in}\; H_{k_0}(t).
\end{equation}
Since $k_0$ is arbitrary, this strict inequality holds true for any $k\in\{2n-1,1,\dots,n-2\}$. In a similar manner, we can show that $v>u$ in $H_{k+1}(t)$ for $k\in\{n,n+2,\dots,2n-3\}$. The first part of $(i)$ follows after combining all these cases.

\noi Let $w:=v-u$. Then $w$ weakly satisfies the following linearized differential equation in $H_{k_0}(t)$:
\begin{equation}
\begin{aligned}\label{Linearized_problem}
    -{\rm div}(A\nabla w)&=\la_1(v^{p-1}-u^{p-1})>0\;\text{in}\; H_{k_0}(t),\\
    w&\geq 0\;\qquad\qquad\qquad\qquad\;\text{on}\; \pa H_{k_0}(t),
\end{aligned}
\end{equation}
where $A(\xi) = |\xi|^{p-2} · \xi = D\Gamma(\xi)$ for $\xi = (\xi_1, \dots,\xi_n) \in \R^n$, $\Gamma : \R^n \rightarrow \R$ is a strictly convex function  defined by $\Gamma(\xi) = \frac{|\xi|^p}{p}$. Moreover, $\Gamma \in \C^\infty(\R^n \setminus\{0\})$ and $A(x)=[a_{ij}(x)]$ is given by
$$a_{ij}(x) =\int_0^1 D_{ij}\Gamma(t\nabla u(x) + (1 - t) \nabla v(x))\dt,$$ where $[D_{ij}\Gamma]$ is the Hessian matrix of $\Gamma$. Let $x_0\in \pa P_t\cap \pa H_{k_0}(t)$. Then by Proposition \ref{Strong_maximum}$, \frac{\pa u}{\pa \eta}(x_0)<0$. Now by similar arguments as in the proof of \cite[Theorem 3.2]{Anisa2015Toledo}, we find a neighbourhood $U$ of $x_0$ in $\overline{\Omega_t}$ such that $A(x)$ is uniformly positive definite on $U$. Thus there exists a neighbourhood $V$ of $\pa P_t\cap \pa H_{k_0}(t)$ such that $A(x)$ is uniformly positive definite and hence the operator in \eqref{Linearized_problem} is uniformly elliptic in $V\cap H_{k_0}(t)$. Now since, $w>0$ in $H_{k_0}(t)$ (using \eqref{Strict_comparison_2}) and $w=0$ on $\pa P_t\cap \pa H_{k_0}(t)$, the strong maximum principle (Proposition \ref{Strong_maximum}) implies that \begin{equation}\label{neumann1}
    \frac{\pa v}{\pa \eta}-\frac{\pa u}{\pa \eta}=\frac{\pa w}{\pa \eta}<0 \;\text{on}\;\pa P_t\cap \pa H_{k_0}(t).
\end{equation}
Since $k_0\in\{2n-1,1,\dots,n-2\}$ is arbitrary, \eqref{neumann1} holds on $\pa P_t\cap \pa H_{k}(t)$ for all
$k\in\{2n-1,1,\dots,n-2\}$. Proceeding in the same way, we can show that
\begin{equation*}
    \frac{\pa v}{\pa \eta}<\frac{\pa u}{\pa \eta}\;\text{on}\;\pa P_t\cap \pa H_{k+1}(t),
\end{equation*}
for $k\in \{n,n+2,\dots,2n-3\}$. Combining all, we conclude that $\frac{\pa v}{\pa \eta}<\frac{\pa u}{\pa \eta}\;\text{on}\;\Ga_{\rm o}(t)$. The proof for the case $1< p \leq\frac{3}{2}$ will follow along the same line of ideas by using the weak comparison principle (Proposition \ref{Weak_comparison}). This completes the proof.
\end{proof}

In the following proposition, we show that the map $t\longmapsto \la_1(t)$ is monotonic in the interval $(0,\frac{\pi}{n})$.  Here, we apply the monotonicity properties of the first eigenfunctions obtained in the previous proposition. 
\begin{proposition}\label{Monotonicity_lambda}
Let $n\in \mathbb{N}$, $n\geq 2$. Then the following holds for the map $t \longmapsto \lambda_1(t)$:
\begin{enumerate}[(i)]
    \item For $\frac{3}{2}<p<\infty$, $\la_1'(t)>0$ for all $t\in(0,\frac{\pi}{n})$.
    \item For $1<p\leq\frac{3}{2}$, $\la_1'(t)\geq 0$ for all $t\in(0,\frac{\pi}{n})$.
    \end{enumerate}
\end{proposition}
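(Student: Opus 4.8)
The plan is to differentiate $\la_1$ along the rotation via the shape-derivative formula \eqref{hadamard} and to reorganize the resulting boundary integral over $\pa P_t$ by means of the sector reflections, so that its sign is governed by the two facts already at our disposal: the normal-derivative comparison of Proposition \ref{Hopf_lemma} and the sign and antisymmetry of $\langle \eta_t, v\rangle$ furnished by Lemma \ref{normal_presentation}. By the periodicity, evenness and the list of critical points recorded in Section \ref{Properties_lambda}, it is enough to work on $t\in(0,\frac{\pi}{n})$ with the $t=0$ configuration in the OFF position; I would describe the odd case, the even case being identical after replacing Lemma \ref{Sector_reflection_odd} by Lemma \ref{Sector_reflection_even} and part $(i)$ of Proposition \ref{Hopf_lemma} by part $(ii)$.

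First I would start from
\[
\la_1'(t)=-(p-1)\int_{\pa P_t}\Big|\tfrac{\pa u_t}{\pa \eta_t}\Big|^{p}\,\langle \eta_t,v\rangle\,\dS
\]
and split $\pa P_t=\bigcup_{k=0}^{2n-1}\big(\pa P_t\cap \sigma_{(k,k+1)}\big)$ along the axes of symmetry of $P_t$. Grouping these arcs into the $n$ reflection pairs supplied by Lemma \ref{Sector_reflection_odd}, I would use each reflection $R_{\al}$, $\al=t+\frac{(k+1)\pi}{n}$, to transport the integral over the larger arc of a pair onto the smaller one. Since $R_\al$ is an isometry it preserves $\dS$; since $P_t$ is symmetric about $z_\al$, Lemma \ref{normal_presentation}$(iii)$ gives $\langle \eta_t,v\rangle(R_\al x)=-\langle \eta_t,v\rangle(x)$; and writing $v_t=u_t\circ R_\al$ as in Proposition \ref{Hopf_lemma} one has $\frac{\pa v_t}{\pa \eta_t}(x)=\frac{\pa u_t}{\pa \eta_t}(R_\al x)$. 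The net effect is to collapse the contribution of each pair to a single integral over the smaller arc $\Gamma\subset \Gamma_{\rm o}(t)$ of the form
\[
(p-1)\int_{\Gamma}\Big[\big|\tfrac{\pa u_t}{\pa \eta_t}(R_\al x)\big|^{p}-\big|\tfrac{\pa u_t}{\pa \eta_t}(x)\big|^{p}\Big]\,\langle \eta_t,v\rangle(x)\,\dS ,
\]
so that $\la_1'(t)$ becomes a sum of such integrals over the arcs comprising $\Gamma_{\rm o}(t)$.

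The two factors are then signed separately. Proposition \ref{Hopf_lemma} gives $\frac{\pa v_t}{\pa \eta_t}\le \frac{\pa u_t}{\pa \eta_t}$ on $\Gamma_{\rm o}(t)$, and since both normal derivatives are strictly negative (Hopf boundary-point lemma, Proposition \ref{Strong_maximum}), this is exactly $\big|\frac{\pa u_t}{\pa \eta_t}(R_\al x)\big|\ge \big|\frac{\pa u_t}{\pa \eta_t}(x)\big|$, i.e. the bracketed factor is nonnegative, and strictly positive when $\frac32<p<\infty$ by the strict form of Proposition \ref{Hopf_lemma} (which rests in turn on the strong comparison principle, Proposition \ref{Strong_comparison}). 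The other factor $\langle \eta_t,v\rangle$ is signed arc by arc from Lemma \ref{normal_presentation}$(ii)$, where $\langle\eta_t,v\rangle$ is proportional to $\pm f'(\phi-t)$, using that the OFF initial position (Section \ref{Initial_position}) together with assumption (\nameref{itm:A2}) fixes whether $f$ is increasing or decreasing on each sector.

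The point I expect to be the real obstacle is that these arc signs of $\langle \eta_t,v\rangle$ are not constant over $\Gamma_{\rm o}(t)$: consecutive sectors carry opposite signs, so the individual pair contributions are not separately of one sign, and the desired inequality cannot be read off pair by pair. The conclusion must instead come from the way the mixed, cross-hemisphere pairing of Lemma \ref{Sector_reflection_odd} matches each arc with a partner of the correct relative size, so that upon summation the nonnegative brackets and the antisymmetry of $\langle \eta_t,v\rangle$ combine to a definite sign; indeed at $t=0$ the upper- and lower-hemisphere contributions cancel, consistently with $\la_1'(0)=0$, and the whole content of the estimate is to control the sign of the imbalance for $t\in(0,\frac{\pi}{n})$. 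This is precisely the step for which the naive within-hemisphere pairing fails when $n$ is odd, and where the cross-hemisphere pairing is indispensable. Once $\la_1'(t)\ge 0$ is secured on $(0,\frac{\pi}{n})$, the strict inequality for $\frac32<p<\infty$ follows from the strict positivity of the brackets, while for $1<p\le\frac32$ only the weak comparison principle (Proposition \ref{Weak_comparison}) is available, yielding nonnegative brackets and hence the non-strict conclusion $\la_1'(t)\ge 0$.
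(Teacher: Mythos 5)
You set up the proof exactly as the paper does: the Hadamard formula \eqref{hadamard}, the splitting of $\pa P_t$ along the symmetry axes, the collapse of each reflection pair onto a single arc via the antisymmetry \eqref{Eta_sign_1} furnished by Lemma \ref{normal_presentation}(iii) (this is \eqref{Split_1}--\eqref{Split_2}, leading to \eqref{integral_2}), and the two signing inputs, namely Proposition \ref{Hopf_lemma} for the bracket and Lemma \ref{normal_presentation}(ii) together with the OFF normalization of Section \ref{Initial_position} and assumption (\nameref{itm:A2}) for $\langle\eta_t,v\rangle$. Up to \eqref{integral_2} your proposal and the paper coincide.

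The divergence --- and the genuine gap --- is at the decisive step. You assert that the collapsed pair contributions cannot be signed individually and that $\la_1'(t)\ge 0$ ``must instead come from the way the mixed pairing matches each arc with a partner of the correct relative size, so that upon summation\ldots{} combine to a definite sign.'' That sentence is a restatement of the conclusion, not an argument: no estimate comparing the upper- and lower-hemisphere contributions is ever produced, so your proposal never actually proves the inequality, for either range of $p$. The paper concludes quite differently, and pointwise: the mixed pairing of Lemma \ref{Sector_reflection_odd} and the definitions \eqref{Odd_sectors} are engineered precisely so that every surviving integral in \eqref{integral_2} is taken over an arc of $\Ga_{\rm o}(t)$ on which $\langle\eta_t,v\rangle$ has a single sign (this is \eqref{Sign_normal_1}), while Proposition \ref{Hopf_lemma} signs the bracket on those same arcs; the paper then declares each summand of \eqref{integral_2} separately positive (nonnegative for $1<p\le\frac32$) and simply adds, with no inter-pair cancellation bookkeeping at all. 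The alternation of the sign of $\langle\eta_t,v\rangle$ over the whole of $\pa P_t$, which you identify as the obstacle, is dissolved rather than confronted: the opposite-sign arc of each pair is eliminated by the reflection identity \eqref{Eta_sign_1} before any sign decision is made. Likewise, in the paper's scheme $\la_1'(0)=0$ is not obtained from a cancellation between hemispheres but is recorded separately as the critical-point property of Section \ref{Properties_lambda}(iii); the strict inclusions of Lemma \ref{Sector_reflection_odd} feeding the strict brackets are exactly the statements tied to $t\in(0,\frac{\pi}{n})$. To be fair to you, the orientation check in the lower hemisphere --- which sector of each pair carries the surviving integral, and the fact that the change of variables $x\mapsto x'$ flips the bracket and the weight $\langle\eta_t,v\rangle$ simultaneously --- is genuinely delicate, and the paper compresses it into the phrase ``in a similar way''; your instinct that something subtle happens across the $x_1$-axis is pointing at exactly this verification. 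But closing the proof requires carrying out that arc-by-arc sign check (as the paper does), not positing an unproven global imbalance principle; as written, your proposal establishes nothing beyond identity \eqref{integral_2}.
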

\begin{proof} Let $t\in(0,\frac{\pi}{n})$ be fixed, and $u_t$ be the positive eigenfunction of \eqref{problem} corresponding to $\la_1(t)$ such that $||u_t||_p$=1. Now we consider the two cases.\\
\underline{$(a)$ $n$ odd}:
We rewrite the shape derivative formula \eqref{hadamard} as follows:
\begin{align}
 \la_1^\prime(t)
    & = -(p-1)\sum_{k=0}^{2n-1} \int\limits_{\partial P_t \;\cap\; \sigma\left({k,k+1}\right)} \left|{\dfrac{\partial u_t}{\partial \eta_t}}(x)\right|^p\left< \eta_t, v\right>(x)\dS.\nonumber
\end{align}
Thus, 
\begin{align}\label{integral_1}
    \la_1^\prime(t) = -(p-1)\sum_{k=2n-1}^{n-1} \int\limits_{\pa P_t\; \cap\; \sigma_{(k,k+1)}} \left|{\dfrac{\pa u_t}{\pa \eta_t}}(x)\right|^p ~ \left< \eta_t,  v \right>(x)\dS\\
	 -(p-1)\sum_{k=n}^{2n-2} \int\limits_{\partial P_t\; \cap\; \sigma_{(k,k+1)}} \left|{\dfrac{\pa u_t}{\partial \eta_t}}(x)\right|^p \left< \eta_t, v \right>(x)\dS.\nonumber
\end{align}

Let $k\in\{2n-1,1,3,\dots,n-2\}$. Then by Lemma \ref{normal_presentation}, for $\phi \in [0 , \frac{\pi}{n}]$, we have 
	\begin{equation}
	\begin{aligned}
	\label{ref_0}\left<\beta_t , v \right> \left({t+\frac{(k_0+1)\pi}{n}+\phi}\right) = -\left< \beta_t,  v\right> \left({t+\frac{(k_0+1)\pi}{n}-\phi}\right),
	\end{aligned}
	\end{equation}
	where $\beta_t $ is the unit outward normal to $P_t$ on its boundary, i.e., $\beta_t = - \eta_t$. Now using the symmetry of $P_t$, it is easy to observe that $$R_{t+\frac{(k+1)\pi}{n}}(\pa P_t \cap \sigma_{(k, k+1)})=\pa P_t \cap \sigma_{(k+1, k+2)},$$
	for all $k\in\{2n-1,1,3,\dots,n-2\}.$ Thus if $x^\prime:= R_{t+\frac{(k+1)\pi}{n}}(x)$ for $x\in \pa P_t \cap \sigma_{(k,k+1)}$, then we have
	\begin{equation}\label{Eta_sign_1}
	  \left<\eta_t,  v\right> \left( x^\prime \right) = - \left< \eta_t , v \right>\left( x \right)\;\;\forall \; x \in  \pa P_t \cap \sigma_{(k,k+1)}.
	  \end{equation}
	 Therefore using \eqref{Eta_sign_1}, for each $k = 2n-1,1,3,\dots,n-2$, we get 
		\begin{equation}\label{Split_1}
		\begin{aligned}
		\int\limits_{\pa P_t \;\cap\; \sigma_{(k,k+1)}} \left|{\dfrac{\pa u_t}{\pa \eta_t}}(x)\right|^p~\left< \eta_t,  v\right>(x)\dS + \int\limits_{\pa P_t \;\cap\; \sigma_{(k+1,k+2)}} \left|{\dfrac{\pa u_t}{\pa \eta_t}}(x)\right|^p \left<\eta_t , v\right>(x)\dS\\
		=\int\limits_{\pa P_t\; \cap \;\sigma_{(k,k+1)}} \left({\left|{\dfrac{\pa u_t}{\pa \eta_t}}(x)\right|^p-\left|{\dfrac{\pa u_t}{\partial \eta_t}}(x^\prime)\right|^p}\right) \left<\eta_t,  v \right>(x)\dS.
		\end{aligned}
		\end{equation}
Proceeding in a similar manner, for $k = n,n+2, \dots, 2n-3$, we have
\begin{equation}\label{Split_2}
		\begin{aligned}
		\int\limits_{\pa P_t \;\cap\; \sigma_{(k,k+1)}} \left|{\dfrac{\pa u_t}{\pa \eta_t}}(x)\right|^p~\left< \eta_t,  v\right>(x)\dS + \int\limits_{\pa P_t \;\cap\; \sigma_{(k+1,k+2)}} \left|{\dfrac{\pa u_t}{\pa \eta_t}}(x)\right|^p \left<\eta_t , v\right>(x)\dS\\
		=\int\limits_{\pa P_t\; \cap \;\sigma_{(k+1,k+2)}} \left({\left|{\dfrac{\pa u_t}{\pa \eta_t}}(x)\right|^p-\left|{\dfrac{\pa u_t}{\partial \eta_t}}(x^\prime)\right|^p}\right) \left<\eta_t,  v \right>(x)\dS.
		\end{aligned}
		\end{equation}
Therefore, using \eqref{Split_1} and \eqref{Split_2}, expression \eqref{integral_1} can be rewritten as:
\begin{align}\label{integral_2}
    \la_1^\prime(t) = (p-1)\sum_{\substack{2n-1\leq k\leq n-2\\k \mbox{ \small{odd} }}} \int_{\pa P_t\; \cap\; \sigma_{(k,k+1)}} \left({\left|{\dfrac{\pa u_t}{\pa \eta_t}}(x')\right|^p-\left|{\dfrac{\pa u_t}{\partial \eta_t}}(x)\right|^p}\right)\left< \eta_t,  v \right>(x)\dS\\
	 +(p-1)\sum_{\substack{n\leq k\leq 2n-3\\k \mbox{ \small{odd} }}}\int_{\partial P_t\; \cap\; \sigma_{(k,k+1)}} \left({\left|{\dfrac{\pa u_t}{\pa \eta_t}}(x')\right|^p-\left|{\dfrac{\pa u_t}{\partial \eta_t}}(x)\right|^p}\right) \left< \eta_t, v \right>(x)\dS.\nonumber
\end{align}
Now from the parametrization \eqref{P_parametrization} of $P$, we have that $f$ is an increasing function of $\phi$ on $[0,\frac{\pi}{n}]$ (Section \ref{Initial_position}). Also, by noting $\beta_t=-\eta_t$ and applying Lemma \ref{normal_presentation} for $\beta_t$, we have 
\begin{equation}
    \label {Sign_normal_1} \left< \eta_t,  v \right> = - \left< \beta_t,  v \right>  >0 ~ \mbox{ on }\partial P_t \cap \sigma_{(k,k+1)}~\mbox{ for each } k = 2n-1,1,3, \dots , n-2.
\end{equation}

$(i)$ Let $\frac{3}{2}<p<\infty$ and let $v_t$ be the function as defined in Proposition \ref{Hopf_lemma}-$(i)$. Note that, for $k=2n-1,1,\dots,n-2$,
\begin{equation*}
    \frac{\pa v_t }{\pa \eta_t}(x)=\frac{\pa u_t}{\pa\eta_t}(x')\;\;\text{for}\;x\in H_k(t).
\end{equation*}
Since $\frac{\pa u_t}{\pa\eta_t}<0$ on $\pa P_t\cap \sigma_{(k,k+1)}$ for each $k=2n-1,1,\dots,n-2$, Proposition \ref{Hopf_lemma}-$(i)$ yields,
\begin{equation*}
    \frac{\pa v_t }{\pa \eta_t}<\frac{\pa u_t}{\pa\eta_t}<0\;\;\text{on}\;\pa P_t\cap \sigma_{(k,k+1)}\subset \Ga_0(t).
\end{equation*}
Thus for each $k=2n-1,1,\dots,n-2$, we have 
\begin{equation}\label{comparison_1}
    \left|\frac{\pa v_t }{\pa \eta_t}\right|^p>\left|\frac{\pa u_t}{\pa\eta_t}\right|^p\;\;\text{on}\;\pa P_t\cap \sigma_{(k,k+1)}.
\end{equation}
Therefore, equation \eqref{comparison_1} and \eqref{Sign_normal_1} gives that the first integral in \eqref{integral_2} is positive. In a similar way, we can show that the second integral in \eqref{integral_2} is positive too. Hence the  conclusion follows.

$(ii)$ For $1<p\leq \frac{3}{2}$, following the same arguments as in the previous case and using Proposition \ref{Hopf_lemma}-$(i)$, we have 

\begin{equation}\label{comparison_11}
    \left|\frac{\pa v_t }{\pa \eta_t}\right|^p\geq \left|\frac{\pa u_t}{\pa\eta_t}\right|^p\;\;\text{on}\;\pa P_t\cap \sigma_{(k,k+1)},
\end{equation}
for $k=2n-1,1,\dots,n-2$ and $k=n,n+2,\dots,2n-3$. Thus the assertion follows from \eqref{comparison_11}, \eqref{Sign_normal_1} and \eqref{integral_2}.

\noi\underline{$(b)$ $n$ even}: We rewrite the shape derivative formula \eqref{hadamard} as follows:
\begin{align}\label{integral_3}
    \la_1^\prime(t) = -(p-1)\sum_{k=0}^{n-1} \int\limits_{\pa P_t\; \cap\; \sigma_{(k,k+1)}} \left|{\dfrac{\pa u_t}{\pa \eta_t}}(x)\right|^p ~ \left< \eta_t,  v \right>(x)\dS\\
	 -(p-1)\sum_{k=n}^{2n-1} \int\limits_{\partial P_t\; \cap\; \sigma_{(k,k+1)}} \left|{\dfrac{\pa u_t}{\partial \eta_t}}(x)\right|^p \left< \eta_t, v \right>(x)\dS.\nonumber
\end{align}
Now using Lemma \ref{Sector_reflection_even} and following the same way as in the previous case, we can further rewrite the expression as,
\begin{align}
    \label{integral_4}
    \la_1^\prime(t) = (p-1)\sum_{\substack{0\leq k\leq n-2\\k \mbox{ \small{even} }}} \int_{\pa P_t\; \cap\; \sigma_{(k,k+1)}} \left({\left|{\dfrac{\pa u_t}{\pa \eta_t}}(x')\right|^p-\left|{\dfrac{\pa u_t}{\partial \eta_t}}(x)\right|^p}\right)\left< \eta_t,  v \right>(x)\dS\\
	 +(p-1)\sum_{\substack{n\leq k\leq 2n-2\\k \mbox{ \small{even} }}}\int_{\partial P_t\; \cap\; \sigma_{(k,k+1)}} \left({\left|{\dfrac{\pa u_t}{\pa \eta_t}}(x')\right|^p-\left|{\dfrac{\pa u_t}{\partial \eta_t}}(x)\right|^p}\right) \left< \eta_t, v \right>(x)\dS.\nonumber
\end{align}
Hence  the assertion follows along the same line of ideas using Proposition  \ref{Hopf_lemma}-$(ii)$ and Lemma \ref{normal_presentation}. This finishes the proof.
\end{proof}
\begin{proof}[Proof of Theorem \ref{theorem}.] 
The conclusions of the theorem follow immediately from Proposition \ref{Monotonicity_lambda} and from the properties of $\la_1$ listed in Section \ref{Properties_lambda}. 
\end{proof}

\begin{remark}\label{Final_remark}
\begin{enumerate}[(i)]
 \item Let $n\in \N$, $n\geq 2$ be fixed. Let $P_1$ and $P_2$ be two planar domains satisfying (\nameref{itm:A0}), (\nameref{itm:A1}) (for $n$) and (\nameref{itm:A2}) such that both $P_1$ and $P_2$ are centered at $(0,0)$. Suppose that $P_2\subsetneq P_1$. Now following Definition \ref{On_Off_2}, we call $P_2$ to be in an ON (OFF) position with respect to $P_1$ if $P_2$ is in an ON (OFF) position with respect to $\pa P_1$; cf. \cite{kiwanDihed}. Now adapting the ideas similar to the ones used so far, we can deduce results analogous to Theorem \ref{theorem} for $\Om=P_1\setminus \overline{P_2}$. More precisely, for $\frac{3}{2}<p<\infty$, among all rotations of $P_2$ within $P_1$, $\la_1$ is maximum (minimum) only when $P_2$ is in ON (OFF) position with respect to $P_1$. Similarly, for $1<p\leq\frac{3}{2}$, among all rotations of $P_2$ within $P_1$, $\la_1$ is maximum (minimum) when $P_2$ is in ON (OFF) position with respect to $P_1$.
 
 \item (Global  optimizers) Here we would like to comment on the global maximizer and minimizer with respect to all rigid motions, i.e., rotations and  the displacements of the obstacle $P$ along the $x_1$-axis. The global maximum, i.e., maximizer with respect to the
translations and rotations of $P$ within B, occurs when $B$ and $P$ are concentric. The proof of this fact follows from \cite[Theorem 5.2]{Anisa2020}. In the concentric position, since rotations of $P$ do not change $\la_1$ (as domains remain isometric), $\la_1(\Om_t)$ is the maximum for all $t\in[0,2\pi)$. Similarly, the global minimum occurs when $P$ is in an OFF position with respect to $B$ with a pair of its consecutive outer vertices touching $\pa B$; cf. \cite[Theorem 5.2]{Anisa2020}. 
 
 \item\label{non-smooth} For $p=2$, we can extend these results to concentric polygons having non-smooth boundaries. The difficulty arises because the first eigenfunction of (\ref{problem}) is not in 
 $\C^\infty(\overline{\Omega})$. However, it is in $H^{1+\delta}(\Omega)$ for some $\delta \in (\frac{1}{2}, \frac{3}{5})$. As a result, the shape calculus part of the proof, leading to the expression for $\lambda_1^\prime$, becomes more technical. We follow \cite{AithalRaut} for this extension.
 
 \item The extension mentioned in $(iii)$ can further be generalised to the same family of domains on certain non-Euclidean spaces as well. Following \cite{AithalRaut, AnisaAithal2005}, one can generalise these results to such family of domains in all the three space forms (viz., the Euclidean plane $\mathbb{E}^2$, the Riemann sphere $S^2$, and the hyperbolic space $\mathbb{H}^2$). For space forms, reflection about a geodesic is always an isometry of the Riemannian manifold. Hence, the reflection technique follows for the spherical as well as the hyperbolic polygons. 
 
 \item Consider the same family of non-smooth polygonal domains as described in $(iii)$. We now consider the following stationary problem $-\De u=1$
 in $\Omega$, $u=0$ on $\pa \Omega$. 
 Then, as in \cite{AithalSaraswat}, we can show that the associated Dirichlet energy functional $\int_\Omega \|u\|^2\, dx$ attains its extremum values when the axes of symmetry of the polygons coincide.
 
 \item From \cite{Anisa2021Souvik}, it follows that we can generalize these results to a generic second-order uniformly elliptic operator $L$ in the divergence form and consider the associated Dirichlet
eigenvalue problem. Here, 
\begin{equation*}
Lu\equiv -\nabla\cdot(a(x)\nabla u) = \lambda_1(t) u,~ u > 0 ~\mbox{in } \Omega_t, 
~ u(t) = 0~ \mbox{on } \partial\Omega_t, ~
\int_{\Omega_t} u^2(x)~dx = 1.
\end{equation*}
The coefficient $a(x)$ is chosen such that  
the operator $L$ is invariant under rotations and translations in the plane. 
 
 \end{enumerate}
\end{remark}

Now we give a proof of Theorem \ref{Nodal_theorem}. In the proof, we use the strict monotonicity of the first eigenvalue for $\frac{3}{2}<p<\infty$, obtained in Theorem \ref{theorem}, and the variational characterization \eqref{Variational_second} of the second eigenvalue. 

\begin{proof}[Proof of Theorem \ref{Nodal_theorem}]
Let the nodal set $\Nn_{u_2}$ of $u_2$ satisfy the properties mentioned in Theorem \ref{Nodal_theorem}. If possible, let $\Nn_{u_2}$ enclose $P$ in such a way that $\Nn_{u_2}$ is concentric with $P$ and $P$ is in an ON position with respect to it. Let $\Om_1\subset \Om$ be the domain enclosed by $\Nn_{u_2}$ and $\pa P$,  and $\Om_2=\Om\setminus \overline{\Om_1}$. Then $\Om_1$ and $\Om_2$ are the nodal domains of $u_2$. Without loss of generality we assume that $\Om_1=\{x\in \Om:u_2(x)>0\}$ and $\Om_2=\{x\in \Om:u_2(x)<0\}$. Since $u_2$ does not change sign in $\Om_1$ as well as in $\Om_2$, we have $\la_1(\Om_1)=\la_2=\la_1(\Om_2).$ By hypothesis, $P$ is in an ON position with respect to $\Nn_{u_2}$. Now since $P$ is not in an OFF position with respect to $B$, neither is $\Nn_{u_2}$. Thus we can choose $t\in (0,\frac{\pi}{n_0})$ such that $\rho_t(\Nn_{u_2})$ becomes in an OFF position with respect to $B$, and at the same time, $P$ also comes in an OFF position with respect to $\rho_t(\Nn_{u_2})$. Let $(\Om_1)_t$ be the domain enclosed by $\rho_t(\Nn_{u_2})$ and $\pa P$, and $(\Om_2)_t=\Om\setminus\overline{(\Om_1)_t}$.

\noi\underline{\bf Case $(i)\; c>0$}: Now using Remark \ref{Final_remark}-$(i)$ for the domain $(\Om_1)_t$ and using Theorem \ref{theorem}-$(i)$ for the domain $(\Om_2)_t$ (as $c>0$), we obtain 
\begin{equation}\label{mono_1}
    \la_1((\Om_1)_t)<\la_1(\Om_1)=\la_2\;\text{and}\; \la_1((\Om_2)_t)<\la_1(\Om_2)=\la_2.
\end{equation}
Let $\psi_1$ and $\psi_2$ be the positive eigenfunction associated to $\la_1((\Om_1)_t)$ and $\la_1((\Om_2)_t)$ respectively. Let $$\A=\{\psi\in W^{1,p}_0(\Om): \psi=a\widehat{\psi}_1+b\widehat{\psi}_2,\; |a|^p||\psi_1||_p^p+|b|^p||\psi_2||_p^p=1\},$$
where $\widehat{\psi}_1$ and $\widehat{\psi}_2$ are the zero extensions of $\psi_1$ and $\psi_2$ respectively to $\Om$. Then obviously $\A\in \E$. Also for any $\psi\in \A$, using \eqref{mono_1}, we get
\begin{align}
    \int\limits_{\Om}|\nabla \psi|^p\dx=|a|^p\int\limits_{(\Om_1)_t}|\nabla \psi_1|^p\dx+|b|^p\int\limits_{(\Om_2)_t}| \nabla\psi_2|^p\dx=\la_1((\Om_1)_t)|a|^p||\psi_1||_p^p+\la_1((\Om_2)_t)|b|^p||\psi_2||_p^p.
\end{align}
Now since $\psi$ was arbitrary, we have $\sup\limits_{\psi\in\A}J(\psi)\leq \max\{\la_1((\Om_1)_t),\la_1((\Om_2)_t)\}<\la_2$, which is a contradiction to the variational characterization \eqref{Variational_second} of $\la_2$. 

\noi\underline{\bf Case $(ii)\; c=0$}: Again by Remark \ref{Final_remark}-$(i)$ for the domain $(\Om_1)_t$, we have 
\begin{equation}\label{eq1}
    \la_1((\Om_1)_t)<\la_1(\Om_1)=\la_2.
\end{equation}
By hypothesis, $\Nn_{u_2}$  and $P$ are concentric. Since $B$ and $P$ are concentric too, so are $B$ and $\Nn_{u_2}$. Thus $\Om_2$ is isometric to $(\Om_2)_t$ and hence
\begin{equation}\label{eq2}
    \la_1((\Om_2)_t)=\la_1(\Om_2)=\la_2.
\end{equation}
Now using \eqref{eq1} and the continuity of $\la_1$ with respect to the perturbations of $(\Om_1)_t$ (cf. \cite[Theorem 1]{Melian2001}), we can choose a domain $A\subsetneq (\Om_1)_t$ such that $\la_1((\Om_1)_t)<\la_1(A)<\la_2.$ Let $B=\Om\setminus \overline{A}$. Then $(\Om_2)_t\subsetneq B$. Therefore \eqref{eq2} yields that $\la_1(B)<\la_1((\Om_2)_t)=\la_2.$ Repeating the arguments of case $(i)$ for domains $A$, $B$ we arrive at a contradiction again. This completes the proof.
\end{proof}
\subsection*{Open Problem}
Note that in Proposition \ref{Monotonicity_lambda}-$(ii)$, we have established the monotonicity of the map $t\longmapsto\la_1(t)$ on $(0,\frac{\pi}{n})$ for $1<p\leq \frac{3}{2}$. However, we anticipate that the strict monotonicity and hence Theorem \ref{Nodal_theorem} hold for this range of $p$ as well.
\section{Acknowledgments} A. M. H. Chorwadwala was supported  by  the  MATRICS:  Science  and  Engineering  Research  Board  Grant MTR/2019/001309. M. Ghosh gratefully acknowledges the financial support provided by IIT Madras. This work was started during the visit of M. Ghosh to IISER Pune. He would like to thank IISER Pune for their invitation and the hospitality provided there.

\bibliographystyle{abbrv} 
\bibliography{reference}

\begin{thebibliography}{10}

\bibitem{AithalRaut}
A.~R. Aithal and R.~Raut.
\newblock On the extrema of {D}irichlet's first eigenvalue of a family of
  punctured regular polygons in two dimensional space forms.
\newblock {\em Proc. Indian Acad. Sci. Math. Sci.}, 122(2):257--281, 2012.

\bibitem{AithalSaraswat}
A.~R. Aithal and A.~Sarswat.
\newblock On a functional connected to the {L}aplacian in a family of punctured
  regular polygons in {$\Bbb{R}^2$}.
\newblock {\em Indian J. Pure Appl. Math.}, 45(6):861--874, 2014.

\bibitem{Alessandrini}
G.~Alessandrini.
\newblock Nodal lines of eigenfunctions of the fixed membrane problem in
  general convex domains.
\newblock {\em Comment. Math. Helv.}, 69(1):142--154, 1994.

\bibitem{AnisaAithal2005}
M.~H.~C. Anisa and A.~R. Aithal.
\newblock On two functionals connected to the {L}aplacian in a class of doubly
  connected domains in space-forms.
\newblock {\em Proc. Indian Acad. Sci. Math. Sci.}, 115(1):93--102, 2005.

\bibitem{Anoop2021Szego}
T.~Anoop, V.~Bobkov, and P.~Drabek.
\newblock Szego-weinberger type inequalities for symmetric domains with holes.
\newblock {\em arXiv preprint arXiv:2102.05932}, 2021.

\bibitem{Anoop2020Ashok}
T.~V. Anoop and K.~Ashok~Kumar.
\newblock On reverse {F}aber-{K}rahn inequalities.
\newblock {\em J. Math. Anal. Appl.}, 485(1):123766, 20, 2020.

\bibitem{Anoop2020Kesavan}
T.~V. Anoop, K.~Ashok~Kumar, and S.~Kesavan.
\newblock A shape variation result via the geometry of eigenfunctions.
\newblock {\em J. Differential Equations}, 298:430--462, 2021.

\bibitem{Anoop2018}
T.~V. Anoop, V.~Bobkov, and S.~Sasi.
\newblock On the strict monotonicity of the first eigenvalue of the
  {$p$}-{L}aplacian on annuli.
\newblock {\em Trans. Amer. Math. Soc.}, 370(10):7181--7199, 2018.

\bibitem{Anoop2016Drabek}
T.~V. Anoop, P.~Dr\'{a}bek, and S.~Sasi.
\newblock On the structure of the second eigenfunctions of the
  {$p$}-{L}aplacian on a ball.
\newblock {\em Proc. Amer. Math. Soc.}, 144(6):2503--2512, 2016.

\bibitem{Barles1988}
G.~Barles.
\newblock Remarks on uniqueness results of the first eigenvalue of the
  {$p$}-{L}aplacian.
\newblock {\em Ann. Fac. Sci. Toulouse Math. (5)}, 9(1):65--75, 1988.

\bibitem{Bobkov2019Kolonotskii}
V.~Bobkov and S.~Kolonitskii.
\newblock On a property of the nodal set of least energy sign-changing
  solutions for quasilinear elliptic equations.
\newblock {\em Proc. Roy. Soc. Edinburgh Sect. A}, 149(5):1163--1173, 2019.

\bibitem{Anisa2015}
A.~M.~H. Chorwadwala and R.~Mahadevan.
\newblock An eigenvalue optimization problem for the {$p$}-{L}aplacian.
\newblock {\em Proc. Roy. Soc. Edinburgh Sect. A}, 145(6):1145--1151, 2015.

\bibitem{Anisa2015Toledo}
A.~M.~H. Chorwadwala, R.~Mahadevan, and F.~Toledo.
\newblock On the {F}aber-{K}rahn inequality for the {D}irichlet
  {$p$}-{L}aplacian.
\newblock {\em ESAIM Control Optim. Calc. Var.}, 21(1):60--72, 2015.

\bibitem{Anisa2020}
A.~M.~H. Chorwadwala and S.~Roy.
\newblock How to place an obstacle having a dihedral symmetry inside a disk so
  as to optimize the fundamental {D}irichlet eigenvalue.
\newblock {\em J. Optim. Theory Appl.}, 184(1):162--187, 2020.

\bibitem{Anisa2021Souvik}
A.~M.~H. Chorwadwala and S.~Roy.
\newblock {\em Placement of an obstacle for optimizing the fundamental
  eigenvalue of divergence form elliptic operators}.
\newblock Advances in Continuum Mechanics. Birkhauser-Springer, Boston, In
  Print, 2021.

\bibitem{Drabek1999Robinson}
P.~Dr\'{a}bek and S.~B. Robinson.
\newblock Resonance problems for the {$p$}-{L}aplacian.
\newblock {\em J. Funct. Anal.}, 169(1):189--200, 1999.

\bibitem{Drabek2020Robinson}
P.~Dr\'{a}bek and S.~B. Robinson.
\newblock An extended variational characterization of the {F}u\v{c}\'{\i}k
  spectrum for the {$p$}-{L}aplace operator.
\newblock {\em Calc. Var. Partial Differential Equations}, 59(2):Paper No. 70,
  25, 2020.

\bibitem{kiwanDihed}
A.~El~Soufi and R.~Kiwan.
\newblock Extremal first {D}irichlet eigenvalue of doubly connected plane
  domains and dihedral symmetry.
\newblock {\em SIAM J. Math. Anal.}, 39(4):1112--1119, 2007/08.

\bibitem{Azorero1987}
J.~P. Garc\'{\i}a~Azorero and I.~Peral~Alonso.
\newblock Existence and nonuniqueness for the {$p$}-{L}aplacian: nonlinear
  eigenvalues.
\newblock {\em Comm. Partial Differential Equations}, 12(12):1389--1430, 1987.

\bibitem{Melian2001}
J.~Garc\'{\i}a~Meli\'{a}n and J.~Sabina~de Lis.
\newblock On the perturbation of eigenvalues for the {$p$}-{L}aplacian.
\newblock {\em C. R. Acad. Sci. Paris S\'{e}r. I Math.}, 332(10):893--898,
  2001.

\bibitem{Harell2001}
E.~M. Harrell, II, P.~Kr\"{o}ger, and K.~Kurata.
\newblock On the placement of an obstacle or a well so as to optimize the
  fundamental eigenvalue.
\newblock {\em SIAM J. Math. Anal.}, 33(1):240--259, 2001.

\bibitem{henrot2021}
A.~Henrot.
\newblock {\em Shape optimization and spectral theory}.
\newblock De Gruyter, Berlin, Boston, 13 Apr. 2021.

\bibitem{Henrot2006}
A.~Henrot.
\newblock {\em Extremum problems for eigenvalues of elliptic operators}.
\newblock Frontiers in Mathematics. Birkh\"{a}user Verlag, Basel, 2006.

\bibitem{Hersch}
J.~Hersch.
\newblock The method of interior parallels applied to polygonal or multiply
  connected membranes.
\newblock {\em Pacific J. Math.}, 13:1229--1238, 1963.

\bibitem{Nadirashvili}
M.~Hoffmann-Ostenhof, T.~Hoffmann-Ostenhof, and N.~Nadirashvili.
\newblock The nodal line of the second eigenfunction of the {L}aplacian in
  {${\bf R}^2$} can be closed.
\newblock {\em Duke Math. J.}, 90(3):631--640, 1997.

\bibitem{Jerison}
D.~Jerison.
\newblock The first nodal line of a convex planar domain.
\newblock {\em Internat. Math. Res. Notices}, (1):1--5, 1991.

\bibitem{Lindqvist2006}
B.~Kawohl and P.~Lindqvist.
\newblock Positive eigenfunctions for the {$p$}-{L}aplace operator revisited.
\newblock {\em Analysis (Munich)}, 26(4):545--550, 2006.

\bibitem{Kennedy2013}
J.~B. Kennedy.
\newblock Closed nodal surfaces for simply connected domains in higher
  dimensions.
\newblock {\em Indiana Univ. Math. J.}, 62(3):785--798, 2013.

\bibitem{Kesavan}
S.~Kesavan.
\newblock On two functionals connected to the {L}aplacian in a class of doubly
  connected domains.
\newblock {\em Proc. Roy. Soc. Edinburgh Sect. A}, 133(3):617--624, 2003.

\bibitem{Kiwan2018}
R.~Kiwan.
\newblock On the nodal set of a second {D}irichlet eigenfunction in a doubly
  connected domain.
\newblock {\em Ann. Fac. Sci. Toulouse Math. (6)}, 27(4):863--873, 2018.

\bibitem{Lin1987}
C.~S. Lin.
\newblock On the second eigenfunctions of the {L}aplacian in {${\bf R}^2$}.
\newblock {\em Comm. Math. Phys.}, 111(2):161--166, 1987.

\bibitem{Lindqvist1990}
P.~Lindqvist.
\newblock On the equation {${\rm div}\,(|\nabla u|^{p-2}\nabla
  u)+\lambda|u|^{p-2}u=0$}.
\newblock {\em Proc. Amer. Math. Soc.}, 109(1):157--164, 1990.

\bibitem{Payne1967}
L.~E. Payne.
\newblock Isoperimetric inequalities and their applications.
\newblock {\em SIAM Rev.}, 9:453--488, 1967.

\bibitem{Protter}
M.~H. Protter and H.~F. Weinberger.
\newblock {\em Maximum principles in differential equations}.
\newblock Springer-Verlag, New York, 1984.
\newblock Corrected reprint of the 1967 original.

\bibitem{Putter}
R.~P\"{u}tter.
\newblock On the nodal lines of second eigenfunctions of the fixed membrane
  problem.
\newblock {\em Comment. Math. Helv.}, 65(1):96--103, 1990.

\bibitem{ramm}
A.~G. Ramm and P.~N. Shivakumar.
\newblock Inequalities for the minimal eigenvalue of the {L}aplacian in an
  annulus.
\newblock {\em Math. Inequal. Appl.}, 1(4):559--563, 1998.

\bibitem{Sarswat2014nodal}
A.~Sarswat.
\newblock On the nodal line of a second eigenfunction of the
  laplacian-dirichlet in some annular domains with dihedral symmetry.
\newblock {\em arXiv:1411.0221}, 2014.

\bibitem{Sciunzi2014}
B.~Sciunzi.
\newblock Regularity and comparison principles for {$p$}-{L}aplace equations
  with vanishing source term.
\newblock {\em Commun. Contemp. Math.}, 16(6):1450013, 20, 2014.

\bibitem{Serrin}
J.~Serrin.
\newblock A symmetry problem in potential theory.
\newblock {\em Arch. Rational Mech. Anal.}, 43:304--318, 1971.

\bibitem{Weinberger}
H.~F. Weinberger.
\newblock An effectless cutting of a vibrating membrane.
\newblock {\em Pacific J. Math.}, 13:1239--1240, 1963.

\end{thebibliography}

\end{document}